\newtheorem{theorem}{Theorem}
\newtheorem{lemma}{Lemma}%
\newtheorem{remark}{Remark}%
\newtheorem{assumption}{Assumption}
\newcommand{\code}{\texttt}
\newcommand{\st}{{\mathrm{s.t.}}}
\newcommand{\ie}{\textit{i.e.},\xspace}
\newcommand{\expec}{\mathbb{E}}
\newcommand{\Unif}{\mathrm{Unif}}
\def\ltr{\intercal}
\def\tr{^{\ltr}}
\def\Re{\mathbb{R}}
\newcommand{\Diag}{\mathsf{diag}}
\newcommand{\proj}{\mathrm{proj}}
\def\coul{black}
\def\ncoul{black}
\def\rel{\text{relax}}
\def\kkt{\text{KKT}}
\def\dual{\text{dual}}
\def\optval#1#2{{\color{\ncoul}\vartheta_{#1}^{#2}}}
\def\RndThRH{{\color{\ncoul}RndTh\_RH}}
\def\RH{{\color{\coul}\textbf{RH}}\xspace}
\def\mymark#1{\widetilde{#1}}
\def\vax{x}        
\def\cox{c_{\vax}} 
\def\vaz{z}        
\def\coz{c_{\vaz}} 
\def\vay{y}        
\def\vaw{w}        
\def\dvapi{\pi}    
\def\dvamu{\mu}    
\def\dvath{\theta} 
\def\lhsy{{\color{\ncoul}G}}
\def\lhsw{{\color{\ncoul}H}}
\def\rhsB{h}
\def\dim#1{\color{\ncoul}d_{#1}} 
\def\feas{\mathcal{P}}
\def\vadem{D}        
\def\vasup{S}
\def\vaflo{f}
\def\coq{{\color{\ncoul}v}}
\def\vaq{Q}
\def\conewz{{\color{\ncoul}c}}
\def\dvala{\lambda}
\def\dvaal{\alpha}
\newcommand{\nodes}{{\color{\ncoul}\mathsf{N}}}
\newcommand{\arcs}{{\color{\ncoul}\mathsf{A}}}
\def\reac{{\color{\ncoul}R}}
\def\dvapio{\dvapi_0} 
\def\dvapii{\dvapi_1} 
\def\dvalao{\dvala_0}
\def\dvalai{\dvala_1}
\def\lhsytr{{\color{\ncoul}\lhsy\tr}}
\def\lhsyo{{\color{\ncoul}\lhsy_0}}
\def\lhsyi{{\color{\ncoul}\lhsy_1}}
\def\lhswtr{{\color{\ncoul}\lhsw\tr}}
\def\lhswo{{\color{\ncoul}\lhsw_0}}
\def\lhswi{{\color{\ncoul}\lhsw_1}}
\def\brhsB{{\color{\ncoul}\bm{\rhsB}}}
\def\brhsBtr{{\color{\ncoul}\bm{\rhsB\tr}}}
\def\rhsBo{{\color{\ncoul}\bm{\rhsB_0}}}
\def\rhsBi{{\color{\ncoul}\bm{\rhsB_1}}}
\def\vaqmax{{\color{\ncoul}\vaq_{\max}}}
\def\bvax{{\color{\ncoul}\bm{\vax}}}
\def\bcox{{\color{\ncoul}\bm{\cox}}}
\def\bcoxtr{{\color{\ncoul}\bm{\cox}\tr}}
\def\bvaz{{\color{\ncoul}\bm{\vaz}}}
\def\bvazdem{{\color{\ncoul}\bvaz^{\bvadem}}}
\def\bvazsup{{\color{\ncoul}\bvaz^{\bvasup}}}
\def\bvazflo{{\color{\ncoul}\bvaz^{\bvaflo}}}
\def\bcoz{{\color{\ncoul}\bm{\coz}}}
\def\bcoztr{{\color{\ncoul}\bm{\coz}\tr}}
\def\bvay{{\color{\ncoul}\bm{\vay}}}
\def\bvaw{{\color{\ncoul}\bm{\vaw}}}
\def\bdvapi{{\color{\ncoul}\bm{\dvapi}}}
\def\markbdvapi{{\color{\ncoul}\bm{\mymark{\dvapi}}}}
\def\bdvapio{{\color{\ncoul}\bm{\dvapio}}}
\def\bdvapiotr{{\color{\ncoul}\bm{\dvapio\tr}}}
\def\markbdvapiotr{{\color{\ncoul}\bm{\mymark{\dvapi}_0\tr}}}
\def\bdvapii{{\color{\ncoul}\bm{\dvapii}}}
\def\bdvaal{{\color{\ncoul}\bm{\dvaal}}}
\def\bdvala{{\color{\ncoul}\bm{\dvala}}}
\def\bdvalao{{\color{\ncoul}\bm{\dvalao}}}
\def\bdvalaotr{{\color{\ncoul}\bm{\dvalao}\tr}}
\def\bdvalai{{\color{\ncoul}\bm{\dvalai}}}
\def\bdvamu{{\color{\ncoul}\bm{\dvamu}}}
\def\bdvamuy{{\color{\ncoul}\bm{\dvamu^{\bvay}}}}
\def\bdvamuyind#1{{\color{\ncoul}\bm{\dvamu^{\bvay}_{#1}}}}
\def\bdvamuw{{\color{\ncoul}\bm{\dvamu^{\bvaw}}}}
\def\bdvamuwind#1{{\color{\ncoul}\bm{\dvamu^{\bvaw}_{#1}}}}
\def\bdvamuflo{{\color{\ncoul}\bm{\dvamu^{\bvaflo}}}}
\def\markbdvamuflo{{\color{\ncoul}\bm{\mymark{\dvamu}^{\bvaflo}}}}
\def\bdvamudem{{\color{\ncoul}\bm{\dvamu^{\bvadem}}}}
\def\markbdvamudem{{\color{\ncoul}\bm{\mymark{\dvamu}^{\bvadem}}}}
\def\bdvamusup{{\color{\ncoul}\bm{\dvamu^{\bvasup}}}}
\def\markbdvamusup{{\color{\ncoul}\bm{\mymark{\dvamu}^{\bvasup}}}}
\def\bdvath{{\color{\ncoul}\bm{\dvath}}}
\def\bdvathy{{\color{\ncoul}\bm{\dvath^{\bvay}}}}
\def\bdvathyind#1{{\color{\ncoul}\bm{\dvath^{\bvay}_{#1}}}}
\def\bdvathw{{\color{\ncoul}\bm{\dvath^{\bvaw}}}}
\def\bdvathwind#1{{\color{\ncoul}\bm{\dvath^{\bvaw}_{#1}}}}
\def\bdvathflo{{\color{\ncoul}\bm{\dvath^{\bvaflo}}}}
\def\markbdvathflo{{\color{\ncoul}\bm{\mymark{\dvath}^{\bvaflo}}}}
\def\bdvathsup{{\color{\ncoul}\bm{\dvath^{\bvasup}}}}
\def\markbdvathsup{{\color{\ncoul}\bm{\mymark{\dvath}^{\bvasup}}}}
\def\bvadem{{\color{\ncoul}\bm{\vadem}}}
\def\bvasup{{\color{\ncoul}\bm{\vasup}}}
\def\bvaflo{{\color{\ncoul}\bm{\vaflo}}}
\def\bvaq{{\color{\ncoul}\bm{\vaq}}}
\def\ubvaz{{\color{\ncoul}\bm{\overline{\bvaz}}}}
\def\ubvax{{\color{\ncoul}\bm{\overline{\bvax}}}}
\def\ubvaw{{\color{\ncoul}\bm{\overline{\bvaw}}}}
\def\ubvawtr{{\color{\ncoul}\bm{\overline{\bvaw}\tr}}}
\def\ubvay{{\color{\ncoul}\bm{\overline{\bvay}}}}
\def\ubvaytr{{\color{\ncoul}\bm{\overline{\bvay}\tr}}}
\def\ubvaflo{{\color{\ncoul}\bm{\overline{\bvaflo}}}}
\def\ubvaflotr{{\color{\ncoul}\bm{\overline{\bvaflo}\tr}}}
\def\ubvasup{{\color{\ncoul}\bm{\overline{\bvasup}}}}
\def\ubvasuptr{{\color{\ncoul}\bm{\overline{\bvasup}\tr}}}
\def\ubvaq{{\color{\ncoul}\bm{\overline{\bvaq}}}}
\newcommand{\nodesD}{\nodes_{\bvadem}}
\newcommand{\nodesS}{\nodes_{\bvasup}}
\newcommand{\nodeso}{\nodes_{0}}
\newcommand{\nodesi}{\nodes_{1}}
\def\idy#1#2{{\color{\ncoul}\mathbb{I}_{#1}^{#2}}}
\def\idyof#1{\idy{0\bvaflo}{#1}}
\def\idyif#1{\idy{1\bvaflo}{#1}}
\def\idyf#1{\idy{\bvaflo}{#1}}
\def\idyoD#1{\idy{0\bvadem}{#1}}
\def\idyiD#1{\idy{1\bvadem}{#1}}
\def\idyD#1{\idy{\bvadem}{#1}}
\def\idyoS#1{\idy{0\bvasup}{#1}}
\def\idyiS#1{\idy{1\bvasup}{#1}}
\def\idyS#1{\idy{\bvasup}{#1}}
\def\myalpha#1#2{{\color{\ncoul}\alpha_{#1}^{#2}}}
\def\mybeta#1#2{{\color{\ncoul}\beta_{#1}^{#2}}}
\def\mygamma#1#2{{\color{\ncoul}\gamma_{#1}^{#2}}}
\def\mybalpha{{\color{\ncoul}\bm{\myalpha{}{}}}}
\def\mybbeta{{\color{\ncoul}\bm{\mybeta{}{}}}}
\def\mybgamma{{\color{\ncoul}\bm{\mygamma{}{}}}}
\def\myPhi{{\color{\ncoul}\Phi}}
\def\myphi{{\color{\ncoul}\phi}}
\def\mypsi#1#2{{\color{\ncoul}\psi_{#1}^{#2}}}
\def\mybpsi{{\color{\ncoul}\bm{\mypsi{}{}}}}
\def\myOmega{{\color{\ncoul}\Omega}}
\def\nsample{{K}}
\def\samp{{\color{\ncoul}\bm{\xi}}}
\def\mylambda#1#2{{\color{\coul}\lambda_{#1}^{#2}}}
\def\markmyblambda{{\color{\coul}\bm{\mymark{\mylambda{}{}}}}}
\newcommand{\setUnbd}{\infty}
\newcommand{\setBd}{\leftrightarrow}
\begin{document}

\title[Article Title]{Globally Solving a Class of Bilevel Programs with Spatial Price Equilibrium Constraints}


\author*[1]{\fnm{Akshit} \sur{Goyal}}\email{goyal080@umn.edu}

\author[1]{\fnm{Jean-Philippe P.} \sur{Richard}}\email{jrichar@umn.edu}

\affil*[1]{\orgdiv{Department of Industrial and Systems Engineering}, \orgname{University of Minnesota}, \orgaddress{
\city{Minneapolis}, \state{MN}, \postcode{55455}, \country{USA}}}

\abstract{
Bilevel programs with spatial price equilibrium constraints are strategic models that consider a price competition at the {lower level}.
These models find application in facility location-price models, optimal bidding in power networks, and integration of renewable energy sources in distribution networks. 
In this paper, for the case where the equilibrium at the lower level can be formulated as an optimization problem, we introduce an enhanced single-level formulation based on duality and show that its relaxation is stronger than the single-level formulation obtained using KKT conditions. 
Compared to the literature \cite{friesz1988algorithms,miller1992heuristic}, this new formulation ($i$) is computationally friendly to global solution strategies using branch-and-bound, and ($ii$) can tackle instances of larger size.
Further, we develop a heuristic procedure to find feasible solutions inside of the branch-and-bound tree that is effective on instances of large size and produces solutions whose objective values are close to the relaxation bound.
We demonstrate the benefits of this formulation and heuristic through an extensive numerical study on synthetic instances of Equilibrium Facility Location \cite{miller1995equilibrium} and on standard IEEE bus networks for planning renewable generation capacity under uncertainty. 
}

\keywords{bilevel optimization, spatial price equilibrium, facility location, renewable generation unit}



\maketitle

\section{Introduction}\label{sec1}
Bilevel programs incorporating spatial price equilibrium (SPE) constraints at the lower level are used to model competitive facility location on networks \cite{tobin1986spatial, friesz1988algorithms, miller1995equilibrium} and analyzing bidding decision of a generating firm in an electric power network \cite{hobbs2000strategic}. 
At the core of these bilevel programs lies the concept of SPE, which involves computing the supply price, demand price, and commodity flow in a network while satisfying the equilibrium condition that the demand price equals the supply price plus the transportation cost if there is a non-zero flow between a pair of demand and supply nodes.
In the literature, the general SPE problem has been formulated as a variational inequality (VI) problem \cite{florian1982new} and several VI-based iterative solution procedures, such as Frank-Wolfe and projection methods, have been proposed \cite{friesz1984alternative, nagurney1987computational}. 
Algorithms based on complementarity formulations of the SPE problem have also been developed \cite{pang1981hybrid, pang1981parametric}. 
We refer to \cite{nagurney2021networks} for a comprehensive review of  spatial price equilibria in networks.

The models we study in this paper utilize SPE as the lower-level problem within a bilevel program. 
This approach is necessary to model how the upper-level decisions affect the equilibrium market price of a commodity, taking into account market competition. 
The resulting equilibrium price then directly impacts the objective function at the upper-level.
In \cite{tobin1986spatial}, the authors formulate a bilevel model of this type to locate a firm's production facilities and to determine production decisions at the upper-level in order to maximize the firm's profit,  which depends on the prices arising from the resulting equilibrium at the {lower level}.
In \cite{friesz1988algorithms}, the authors argue the existence of solutions to this equilibrium facility location (EFL) model whereas \cite{tobin1986spatial, friesz1988algorithms} provide heuristic approaches which involve successive linearization of the nonlinear upper-level objective based on the sensitivity analysis results for VIs discussed in  \cite{tobin1986sensitivity}. 
The authors of \cite{miller1992heuristic} extend the work of \cite{tobin1986spatial} by allowing for additional shipping decisions to be made at the upper-level. 
These articles, however, focus on heuristic solution methods that do not provide guarantees on the quality of solutions obtained. 
Further, these heuristics are tested on problem instances of small size. 

In this paper, we present an approach to obtain globally optimal solutions to a class of bilevel programs with SPE constraints which encompasses the EFL application. 
This paper makes the following contributions:
\begin{enumerate}
    \item 
    We derive a new Duality-based single-level reformulation for bilevel programs with SPE constraints 
    that is stronger than {the KKT-based} reformulation when the variational inequality of the {lower level} can be cast as an optimization problem. 
    This {new} reformulation has a provably bounded root node relaxation, which is advantageous for branch-and-bound. 
    This is in contrast with the {KKT-based} {formulation}, which often has an unbounded root relaxation as we argue theoretically and show computationally in Sections~\ref{sec:SingleLevel} and \ref{sec:NumericExp}, respectively.
    \item 
    We conduct extensive numerical experiments on randomly generated instances of EFL \cite{tobin1986spatial} of varying sizes. 
    To the best of our knowledge, this is the first extensive computational study for this class of bilevel programs. This study establishes that the Duality-based {formulation} allows substantial computational speed ups. 
    \item 
    We introduce an application of bilevel SPE models that provides a novel approach to optimize renewable generation units planning (RGUP) in power distribution networks under uncertainty, extending 
    the work of 
    \cite{zhang2017optimal}.
    We perform numerical experiments on standard IEEE bus networks that show the numerical potential of the approach. 
    \item 
    We develop a generic rounding heuristic procedure for these applications. 
    For larger-sized instances, this heuristic helps solve most instances within an optimality gap of less than $1\%$ in a reasonable amount of time using branch-and-bound solvers.
\end{enumerate}

In Section~\ref{sec:problem_setup}, we introduce the problem, notations, and assumptions. 
We derive a single-level formulation based on KKT conditions in Section~\ref{sec:SingleLevel_Reform1} that we further reformulate using Lagrangian duality in Section~\ref{sec:SingleLevel_Reform2}. 
The theoretical properties of relaxations of these two formulations are discussed in Section~\ref{sec:KKTvsDUAL}.
Lastly, in Section~\ref{sec:NumericExp}, we conduct an extensive computational study on the two applications described above.

\section{Problem description and preliminaries}
\label{sec:problem_setup}
We study SPE-constrained bilevel models where the leader problem is
\begin{subequations} \label{eq:main}
    \begin{align}
        \max_{\bvaz,\bvax}\ &  \bdvapiotr \bvax - \bcoxtr \bvax - \bcoztr   \bvaz \\
        \st\ & {(\bvaz,\bvax) \in \feas} \\
             & {0 \leq \begin{pmatrix} \bvaz \\ \bvax \end{pmatrix} \leq \begin{pmatrix} \ubvaz \\ \ubvax \end{pmatrix}} \\
             & \bdvapi = \begin{pmatrix} \bdvapio \\ \bdvapii \end{pmatrix} : = \mybpsi(\bvax) {\geq 0},
             \label{eq:EqbmPrice}
    \end{align}
\end{subequations}
in which $\bvaz\in\mathcal{Z}:=\mathbb{Z}^{\dim{\bvaz}^I}\times \Re^{\dim{\bvaz}-\dim{\bvaz}^I}$ and $\bvax\in\mathcal{X}:=\mathbb{Z}^{\dim{\bvax}^I}\times \Re^{\dim{\bvax}-\dim{\bvax}^I}$ are the mixed-integer decisions of the leader, $\overline \feas \subseteq \Re^{\dim{\bvaz}+\dim{\bvax}}$ is a closed convex set, 
$\feas = \overline \feas \cap (\mathcal{Z}\times\mathcal{X})$, and $\ubvaz$ and $\ubvax$ are parameters assumed to be finite. 
Further, $\bdvapi$ is an equilibrium price vector constrained to be non-negative and defined as an implicit function $\mybpsi(\cdot)$ of the leader's mixed-integer decision $\bvax$.
Specifically, $\bdvapi$ is a dual solution corresponding to the equality constraints of a variational inequality (whose feasible region depends on $\bvax$) which we refer to as the \textit{follower problem}. 
In case there are multiple dual solutions $\bdvapi$ for a given $\bvax$, one can always encode an optimistic or pessimistic behavior of the follower in the function $\mybpsi(\cdot)$. 
Therefore, we assume that the map $\mybpsi(\cdot)$ is single-valued to ensure that Problem \eqref{eq:main} is well-defined.

For a given $\bvax$, the follower problem is to determine a vector $\bvay^*$ such that 
\begin{align}
   \mathrm{VI}(\myPhi,\mathcal{Y}( \bvax)) : \quad & \langle \myPhi(\bvay^*),\bvay'-\bvay^*\rangle \geq 0 \;\; \forall \bvay' \in \proj_\bvay\mathcal{Y}(\bvax),
   \label{eq:VI-follower}
\end{align} 
where $\myPhi(\bvay)=\big( \myPhi_i(\bvay),\; i\in [\dim{\bvay}]\big)\in\Re^{\dim{\bvay}}$ is the cost vector of the follower and where we use the notation $[\dim{\bvay}]$ for the set $\left\{1,\hdots,\dim{\bvay}\right\}$. 
In \eqref{eq:VI-follower}, $\mathcal{Y}( \bvax)$ is the set of feasible follower solutions, which we assume takes the form
\begin{eqnarray}
\mathcal{Y}(\bvax) = \left\{
\begin{array}{l|l}
(\bvay,\bvaw)\in\Re^{\dim{\bvay}+\dim{\bvaw}} &
   \begin{array}{lll}
  & \lhsyo \bvay + \lhswo \bvaw - \bvax = \rhsBo, \;\;& [{\bdvapio}] \\
  & \lhsyi \bvay + \lhswi \bvaw = \rhsBi, \;\; &[{\bdvapii}]\\
  & \bvay \geq 0, \;\; &[\bdvamuy] \\
  & {\bvay\leq \ubvay}, \;\; &[\bdvathy] \\
  & \bvaw \geq 0, \;\; &[\bdvamuw] \\
  & {\bvaw\leq \ubvaw}, \;\; &[\bdvathw]
  \end{array}
  \end{array}
    \right\}. \label{eq:varfeasibleset} 
\end{eqnarray}
In particular, $\mathcal{Y}( \bvax)$ has continuous variables related by linear constraints, some of which involve the leader mixed-integer decisions $\bvax$. It is admissible for the parameters $\ubvay$ and $\ubvaw$ to be infinite. 
Further, the symbols written inside square brackets in \eqref{eq:varfeasibleset} represent the dual variables associated with the corresponding constraints.
{
\begin{remark}
    Constraint~\eqref{eq:EqbmPrice} requires equilibrium price $\bdvapi$ to be non-negative, which is more meaningful in the applications we study in Section~\ref{sec:NumericExp}. 
    This requirement may lead to infeasibility, which would be detected by solving the single-level formulations of the model presented in Section~\ref{sec:SingleLevel}. 
    However, we do not encounter infeasiblity issues in the  computational experiments reported in Section~\ref{sec:NumericExp}.
\end{remark}
} 
Throughout the paper, we make the following assumptions:

\begin{assumption} \label{assump:JacobSymm}
{The} {vector function $\myPhi$ is continuously differentiable and its Jacobian matrix $\nabla_{\bvay} \myPhi$ is symmetric}.
\end{assumption}

\begin{assumption} \label{assump:Unique} 
For all $\bvay^1$, $\bvay^2 \in\mathrm{dom}\left(\myPhi\right)$ such that $\bvay^1 \neq \bvay^2$, it holds that
{$\left\langle  \myPhi(\bvay^1) -  \myPhi(\bvay^2),\ \bvay^1-\bvay^2 \right\rangle > 0$.
We refer to this property as \textit{strict monotonicity}}.
\end{assumption}

{Let $\bvay_{-i} := (\bvay_{i'}, i'\in [\dim{\bvay}]\setminus\{i\})$.} Under Assumptions \ref{assump:JacobSymm} and \ref{assump:Unique}, Problem~\eqref{eq:VI-follower} can be written as the following convex optimization problem
\begin{align}\label{eq:OPT-follower}
  \min_{(\bvay,\bvaw)\in\mathcal{Y}(\bvax)} \;\; & \myphi_\text{p}(\bvay) := \sum_{i\in [\dim{\bvay}]} \int_{0}^{\bvay_i}  \myPhi_i(\bvay_{-i},u_i)\ du_i
  {,}
\end{align}
see \cite{nagurney2009network} for instance, with {$\nabla_{\bvay} \myphi_\text{p}(\bvay) = \myPhi(\bvay)$}.

\begin{assumption} \label{assump:FollowerFeas}
For any feasible solution $\bvax$ to the leader, the solution set of the follower problem is non-empty, \ie $$\mathcal{Y}^*(\bvax)=\left\{(\bvay^*,\bvaw^*)\in\mathcal{Y}(\bvax)\ \big|\ \langle \myPhi(\bvay^*),\bvay'-\bvay^*\rangle \geq 0 \;\forall \bvay' \in \proj_\bvay\mathcal{Y}(\bvax)\right\}\neq\emptyset.$$ 
\end{assumption}

\begin{remark}\label{remark:CvxFollower}
    Assumption~\ref{assump:Unique} implies that the objective $\myphi_\text{p}(\bvay)$ of the follower is strictly convex. 
    In conjunction with Assumption~\ref{assump:FollowerFeas}, this implies that for every leader decision $\bvax$, there exists a unique {primal} optimal solution $\bvay^*$ to Problem~\eqref{eq:OPT-follower}.
\end{remark}

\begin{remark}\label{remark:RefinedSlaterCondition} 
All equality and inequality constraints in $\mathcal{Y}(\bvax)$ are linear. 
Further, Assumption \ref{assump:FollowerFeas} ensures feasibility of {the} follower problem.  
This implies that Slater’s condition holds \cite[pg.227]{boyd2004convex}. 
Since the follower problem is convex (attaining a noninfinite primal optimal value from Remark~\ref{remark:CvxFollower}), Slater’s condition further implies that ($i$) strong duality holds, and
($ii$) 
a {(not necessarily unique)} dual optimal solution exists.
\end{remark}

\section{Single-level reformulations}
\label{sec:SingleLevel}

We describe two formulations for \eqref{eq:main} 
{when the follower is optimistic}; {see \cite{dempe2002foundations} for differences between optimistic and pessimistic bilevel programs}. 
The first, described in Section~\ref{sec:SingleLevel_Reform1}, is obtained by reformulating the equilibrium problem using the KKT conditions of its equivalent optimization formulation. 
The second, described in Section~\ref{sec:SingleLevel_Reform2}, which is new to this work, is obtained by using duality to rewrite the objective function in the first reformulation using strong duality of {the} follower problem. 
Finally, in Section \ref{sec:KKTvsDUAL}, we compare the strength of the relaxations of these two formulations and develop 
insights into the 
special 
case where the VI cost vector $\myPhi$ is affine.

\subsection{KKT-based {formulation}}
\label{sec:SingleLevel_Reform1}

To obtain the first formulation, we replace \eqref{eq:EqbmPrice} in \eqref{eq:main} with the KKT conditions of Problem~\eqref{eq:OPT-follower}.
We obtain the following MINLP formulation which has complementarity constraints and a bilinear objective function, 
\begin{subequations} \label{eq:SL-MPCC}
\begin{align}
\optval{\kkt}{}
= \max_{\substack{\bvaz,\bvax,\bvay,\bvaw\\ \bdvapi, \bdvamu,\bdvath}}\;\; &  \bdvapiotr \bvax - \bcoxtr \bvax - \bcoztr  \bvaz \label{constr:SL-MPCC-Obj}\\
\st\ \label{constr:SL-MPCC-Leader1} & {(\bvaz,\bvax) \in \feas} \\
     \label{constr:SL-MPCC-Leader2} & {0 \leq \begin{pmatrix} \bvaz \\ \bvax \end{pmatrix} \leq \begin{pmatrix} \ubvaz \\ \ubvax \end{pmatrix}} \\
     \label{constr:SL-MPCC-Leader3} & {\bdvapi \geq 0} \\
     \label{constr:SL-MPCC-PF1} & \lhsy  \bvay + \lhsw \bvaw - \begin{pmatrix} \bvax \\ 0 \end{pmatrix} = \brhsB \\ 
     \label{constr:SL-MPCC-PF2} & 0 \leq \bvay \leq \ubvay,\;\; 0 \leq \bvaw \leq \ubvaw \\
     \label{constr:SL-MPCC-DF} & \bdvamu\geq0, \; \bdvath \geq 0 \\
     \label{constr:SL-MPCC-Stnry1} & \myPhi(\bvay) + \lhsytr \bdvapi + \bdvathy - \bdvamuy = 0 \\ 
     \label{constr:SL-MPCC-Stnry2} & \lhswtr \bdvapi + \bdvathw - \bdvamuw = 0  \\
     \label{constr:SL-MPCC-CC1} & \bvay\tr \bdvamuy = 0,\;\; (\ubvay-\bvay)\tr \bdvathy = 0 \\ 
     \label{constr:SL-MPCC-CC2} & \bvaw\tr \bdvamuw= 0,\;\; (\ubvaw-\bvaw)\tr \bdvathw = 0, 
\end{align}
\end{subequations}
where 
$${\lhsy=\begin{pmatrix} \lhsyo \\ \lhsyi \end{pmatrix},\; \lhsw=\begin{pmatrix} \lhswo \\ \lhswi \end{pmatrix},\; \brhsB=\begin{pmatrix} \rhsBo \\ \rhsBi \end{pmatrix},\; \text{and}\; \bdvapi=\begin{pmatrix} \bdvapio \\ \bdvapii \end{pmatrix}.}$$
In this formulation, constraints \eqref{constr:SL-MPCC-PF1}-\eqref{constr:SL-MPCC-PF2} are the primal feasibility conditions of the KKT system of Problem~\eqref{eq:OPT-follower}, whereas 
\eqref{constr:SL-MPCC-DF} are its dual feasibility conditions, \eqref{constr:SL-MPCC-Stnry1}-\eqref{constr:SL-MPCC-Stnry2} are its stationary conditions, and \eqref{constr:SL-MPCC-CC1}-\eqref{constr:SL-MPCC-CC2} are its complementarity slackness conditions. 
This reformulation is exact as KKT conditions are necessary and sufficient for Problem~\eqref{eq:OPT-follower} as it is a convex optimization problem that satisfies {Slater’s condition; see Remark \ref{remark:RefinedSlaterCondition}}.

The complementarity constraints \eqref{constr:SL-MPCC-CC1}-\eqref{constr:SL-MPCC-CC2} can be reformulated as big-$M$ constraints. 
This approach, however, can lead to sub-optimal or erroneous solutions when the choice of $M$ is not appropriate, as discussed in \cite{kleinert2023there}. 
Instead, when solving this model with commercial software, we use the SOS1 reformulation of \eqref{constr:SL-MPCC-CC1}-\eqref{constr:SL-MPCC-CC2}:
\begin{align}\label{eq:SOS1}
    \begin{aligned}
    & \{\bvay_i,\bdvamuyind{i}\}\; \text{ is\; SOS1},\;\; \{\ubvay_i-\bvay_i,\bdvathyind{i}\}\; \text{ is\; SOS1}\quad  \forall i \in [\dim{\bvay}] \\
    & \{\bvaw_i,\bdvamuwind{i}\}\; \text{ is\; SOS1},\;\; \{\ubvaw_i-\bvaw_i,\bdvathwind{i}\}\; \text{ is\; SOS1}\quad  \forall i \in [\dim{\bvaw}]
    \end{aligned}
\end{align}
as is recommended in \cite{kleinert2023there}.

{
The formulation \eqref{eq:SL-MPCC} is a non-convex model due to the bilinear term in the objective \eqref{constr:SL-MPCC-Obj}, the complementarity constraints \eqref{constr:SL-MPCC-CC1}-\eqref{constr:SL-MPCC-CC2} and the integrality constraints in \eqref{constr:SL-MPCC-Leader1}. 
Since variables $\bdvamu$, $\bdvath$, and maybe some among variables $\bvay$ are unbounded, it is difficult to use McCormick envelopes to relax complementarity constraints \eqref{constr:SL-MPCC-CC1}-\eqref{constr:SL-MPCC-CC2}.
This observation suggests focusing on the relaxation} 
\begin{subequations}\label{eq:SL-MPCC-Relax}
\begin{align}
\optval{\kkt}{\rel}
= \max_{\substack{\bvaz,\bvax,\bvay,\bvaw\\ \bdvapi, \bdvamu,\bdvath}}\;\; &  \bdvapiotr \bvax - \bcoxtr \bvax - \bcoztr   \bvaz \\
\st\ & {(\bvaz,\bvax) \in \overline \feas,\; \eqref{constr:SL-MPCC-Leader2}-\eqref{constr:SL-MPCC-Stnry2}} 
\end{align}
\end{subequations}
obtained by removing complementarity constraints \eqref{constr:SL-MPCC-CC1}-\eqref{constr:SL-MPCC-CC2} and integrality requirements in \eqref{constr:SL-MPCC-Leader1}.
We keep the bilinear term from \eqref{constr:SL-MPCC-Obj} in relaxation \eqref{eq:SL-MPCC-Relax}. 
This is because we will be able to argue in Section~\ref{sec:KKTvsDUAL} that, even without relaxing this bilinear term, relaxation \eqref{eq:SL-MPCC-Relax} can be weaker than relaxation \eqref{eq:SL-LagrDual-Relax} presented in Section~\ref{sec:SingleLevel_Reform2}.

\begin{remark}
    Constraints \eqref{constr:SL-MPCC-Leader2}-\eqref{constr:SL-MPCC-Stnry2} are linear, except possibly for \eqref{constr:SL-MPCC-Stnry1}
    that contains function $\myPhi(\bvay)$, which can be nonlinear. 
    When $\myPhi(\bvay)$ is affine and {$\overline \feas$ is polyhedron}, then \eqref{eq:SL-MPCC-Relax} consists of optimizing a bilinear objective function over a polyhedral feasible region.
\end{remark}

\subsection{{Duality}-based {formulation}}
\label{sec:SingleLevel_Reform2}
{
In this section, we obtain a second reformulation in Theorem~\ref{theorem:SL-Reform2} using Lagrangian duality to rewrite the objective \eqref{constr:SL-MPCC-Obj} in \eqref{eq:SL-MPCC}. We start with the following ancillary result.
}
\begin{lemma}
{For a given leader's decision $\bvax$,} the dual of follower problem \eqref{eq:OPT-follower} is
\begin{subequations}\label{eq:LagrDual}
\begin{align}
        \max_{\bdvapi,\bdvamu,\bdvath} \;\; & \myphi_\text{d}^\bvax(\bdvapi,\bdvamu,\bdvath)
        \begin{aligned}[t]
        :=\ & \myphi_\text{p}\left(\myPhi^{-1}(\bdvamuy-\bdvathy-\lhsytr \bdvapi)\right) \\
        & - \left\langle \bdvamuy-\bdvathy-\lhsytr \bdvapi,\ \myPhi^{-1}(\bdvamuy-\bdvathy-\lhsytr \bdvapi) \right\rangle \\ 
        & -\ubvawtr\bdvathw - \ubvaytr\bdvathy - \brhsBtr \bdvapi - \bdvapiotr \bvax 
        \end{aligned}\label{constr:LagrDual-Obj} \\
        \st \;\; & \lhswtr \bdvapi + \bdvathw - \bdvamuw = 0,\ \bdvamu\geq0,\ \bdvath \geq 0. 
\end{align}
\end{subequations}
\end{lemma}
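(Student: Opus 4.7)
The plan is to derive \eqref{eq:LagrDual} by computing the Lagrangian dual of \eqref{eq:OPT-follower}. Remark~\ref{remark:CvxFollower} establishes strict convexity of the primal objective $\myphi_\text{p}$ and Remark~\ref{remark:RefinedSlaterCondition} supplies strong duality together with existence of a dual optimum, so only the dualization itself must be executed carefully and the outcome recognized as \eqref{eq:LagrDual}.

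First I would form the Lagrangian by dualizing every equality and inequality constraint of $\mathcal{Y}(\bvax)$ using the multipliers assigned in \eqref{eq:varfeasibleset}, and regroup the terms block by block to obtain
\begin{align*}
L(\bvay,\bvaw,\bdvapi,\bdvamu,\bdvath) = &\ \myphi_\text{p}(\bvay) - \langle \bdvamuy - \bdvathy - \lhsytr \bdvapi,\ \bvay\rangle + \langle \lhswtr \bdvapi + \bdvathw - \bdvamuw,\ \bvaw\rangle \\
&\ - \bdvapiotr \bvax - \brhsBtr \bdvapi - \ubvaytr \bdvathy - \ubvawtr \bdvathw.
\end{align*}
Since $\bvaw$ is unconstrained in $L$ and appears linearly, $\inf_{\bvaw} L = -\infty$ unless its coefficient vanishes, that is, unless $\lhswtr \bdvapi + \bdvathw - \bdvamuw = 0$; this gives the dual feasibility equality of \eqref{eq:LagrDual} and eliminates the $\bvaw$-dependent terms from $L$.

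Next I would minimize over $\bvay$. Strict convexity of $\myphi_\text{p}$ makes $L$ strictly convex in $\bvay$, so stationarity is necessary and sufficient. The condition $\nabla_\bvay L = 0$ reads $\myPhi(\bvay) = \bdvamuy - \bdvathy - \lhsytr \bdvapi$, and strict monotonicity of $\myPhi$ (Assumption~\ref{assump:Unique}) implies that $\myPhi$ is injective, so $\myPhi^{-1}$ is well-defined on its range and the unique minimizer is $\bvay^* = \myPhi^{-1}(\bdvamuy - \bdvathy - \lhsytr \bdvapi)$. Substituting $\bvay^*$ into $L$ and collecting terms produces exactly the objective $\myphi_\text{d}^\bvax$ displayed in \eqref{constr:LagrDual-Obj}. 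Maximizing this dual function over $(\bdvapi,\bdvamu,\bdvath)$ subject to the derived equality and to the sign constraints $\bdvamu \geq 0$, $\bdvath \geq 0$ yields \eqref{eq:LagrDual}.

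The most delicate step is ensuring the $\bvay$-infimum is attained, so that $\myPhi^{-1}(\bdvamuy - \bdvathy - \lhsytr \bdvapi)$ is meaningful at the arguments considered; strict convexity by itself does not preclude a dual value of $-\infty$ when $\bdvamuy - \bdvathy - \lhsytr \bdvapi$ falls outside the range of $\myPhi$. Such points lie outside the effective domain of the dual and are discarded by the outer maximization, and Remark~\ref{remark:RefinedSlaterCondition} ensures that this effective domain is nonempty, so \eqref{eq:LagrDual} is a valid representation of the dual of \eqref{eq:OPT-follower}.
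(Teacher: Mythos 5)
Your proposal is correct and follows essentially the same route as the paper: form the Lagrangian, observe that the linear, unconstrained $\bvaw$-block forces $\lhswtr \bdvapi + \bdvathw - \bdvamuw = 0$, minimize over $\bvay$ via the stationarity condition $\myPhi(\bvay) = \bdvamuy - \bdvathy - \lhsytr \bdvapi$ together with invertibility of $\myPhi$ from strict monotonicity, and substitute the minimizer to obtain \eqref{constr:LagrDual-Obj}. Your closing remark on the effective domain of $\myPhi^{-1}$ and its nonemptiness via Remark~\ref{remark:RefinedSlaterCondition} matches the paper's own handling of that point.
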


\begin{proof}
For a given leader's decision $\bvax$, the dual objective of follower problem is
\begin{align}\label{eq:DualObj-Opt}
 \myphi_\text{d}^\bvax(\bdvapi,\bdvamu,\bdvath)
    & = \min_{\bvay,\bvaw}\ \mathcal{L}_{\bvax}(\bvay,\bvaw;\bdvapi,\bdvamu,\bdvath) \nonumber \\
    & = - \ubvawtr \bdvathw - \ubvaytr \bdvathy - \brhsBtr \bdvapi - \bdvapiotr \bvax \nonumber \\
    & \quad + \min_{\bvaw}\ \mathcal{L}_{\bvax}^1(\bvaw;\bdvapi,\bdvamuw,\bdvathw) + \min_{\bvay}\ \mathcal{L}_{\bvax}^2(\bvay;\bdvapi,\bdvamuy,\bdvathy), 
\end{align}
where $\mathcal{L}_{\bvax}^1(\bvaw;\bdvapi,\bdvamuw,\bdvathw) = \left\langle \lhswtr \bdvapi + \bdvathw - \bdvamuw,\bvaw\right\rangle$ and  $\mathcal{L}_{\bvax}^2(\bvay;\bdvapi,\bdvamuy,\bdvathy)=\myphi_\text{p}(\bvay) + \left\langle \lhsytr \bdvapi + \bdvathy - \bdvamuy ,\bvay\right\rangle$.
As variables $\bvaw$ are unrestricted in sign in \eqref{eq:DualObj-Opt}, we observe that 
\[\min_{\bvaw}\ \mathcal{L}_{\bvax}^1(\bvaw;\bdvapi,\bdvamuw,\bdvathw) = \begin{cases}0 & \text{ if } \lhswtr \bdvapi + \bdvathw - \bdvamuw = 0\\ -\infty & \text{ otherwise.}\end{cases}
\]
Further, for fixed $\bdvamu\geq0$, $\bdvath\geq0$, and $\bdvapi$, consider the minimization problem 
$
\min_{\bvay}\ \mathcal{L}_{\bvax}^2(\bvay;\bdvapi,\bdvamuy,\bdvathy).
$
Let $\tilde{\bvay}^*$ be a minimizer. 
Assumption~\ref{assump:Unique} implies that $\mathcal{L}_{\bvax}^2(\cdot;\bdvapi,\bdvamuy,\bdvathy)$ is convex in $\bvay$ for given $\bdvapi,\bdvamu,\bdvath$. 
Hence, the following optimality condition is necessary and sufficient: {$\nabla_\bvay \mathcal{L}_{\bvax}^2(\tilde{\bvay}^*;\bdvapi,\bdvamuy,\bdvathy) = \myPhi(\tilde{\bvay}^*) + \lhsytr \bdvapi + \bdvathy - \bdvamuy = 0$}.
In fact, Assumption~\ref{assump:Unique} implies strict convexity of $\mathcal{L}^2_{\bvax}(\bvay;\bdvapi,\bdvamuy,\bdvathy)$ and invertibility of $\myPhi(\cdot)$. 
If $\left(\bdvapi,\bdvamu,\bdvath\right)$ is such that $\bdvamuy - \bdvathy - \lhsytr \bdvapi\in\mathrm{dom}\left(\myPhi^{-1}\right)$, the minimizer $\tilde{\bvay}^*$ exists and is uniquely given by $\tilde{\bvay}^* =  \myPhi^{-1} (\bdvamuy - \bdvathy - \lhsytr \bdvapi )$. 
According to Remark~\ref{remark:RefinedSlaterCondition}, the dual optimal value is attained
which means there exists $\left(\bdvapi,\bdvamu,\bdvath\right)$ such that $\bdvamuy - \bdvathy - \lhsytr \bdvapi\in\mathrm{dom}\left(\myPhi^{-1}\right)$. 
As a result, substituting $\tilde{\bvay}^* =  \myPhi^{-1} (\bdvamuy - \bdvathy - \lhsytr \bdvapi )$ yields the well-defined dual problem \eqref{eq:LagrDual}.
\end{proof}

\begin{theorem}\label{theorem:SL-Reform2}
{The following model is a reformulation of \eqref{eq:SL-MPCC},}
\begin{subequations} \label{eq:SL-LagrDual}
\begin{align}
\optval{\dual}{}
= \max_{\substack{\bvaz,\bvax,\bvay,\bvaw\\ \bdvapi, \bdvamu,\bdvath}}\;\;
& - \left\langle \myPhi(\bvay),\ \bvay \right\rangle - \ubvawtr \bdvathw - \ubvaytr \bdvathy - \brhsBtr \bdvapi -\bcoxtr \bvax - \bcoztr   \bvaz \label{eq:SL-LagrDual-Obj}\\
\st\;\; & \eqref{constr:SL-MPCC-Leader1} - \eqref{constr:SL-MPCC-CC2}
\end{align}
\end{subequations}
{\textit{i.e.}, it has {the} same optimal value and optimal solutions as \eqref{eq:SL-MPCC}}.
\end{theorem}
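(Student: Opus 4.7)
The plan is to exploit the fact that formulations \eqref{eq:SL-MPCC} and \eqref{eq:SL-LagrDual} share exactly the same feasibility constraints \eqref{constr:SL-MPCC-Leader1}--\eqref{constr:SL-MPCC-CC2}, so it suffices to show that the two objective functions \eqref{constr:SL-MPCC-Obj} and \eqref{eq:SL-LagrDual-Obj} agree pointwise on the common feasible region; once that is established, both the optimal values and the optimal solution sets coincide automatically. After cancelling the common terms $-\bcoxtr \bvax - \bcoztr \bvaz$, the claim reduces to the identity
\[
\bdvapiotr \bvax \;=\; -\langle \myPhi(\bvay), \bvay\rangle - \ubvawtr \bdvathw - \ubvaytr \bdvathy - \brhsBtr \bdvapi
\]
at every feasible point.

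The derivation I would carry out is a direct algebraic manipulation in three steps. First, I take the inner product of stationarity condition \eqref{constr:SL-MPCC-Stnry1} with $\bvay$ and of \eqref{constr:SL-MPCC-Stnry2} with $\bvaw$, obtaining $\langle \myPhi(\bvay), \bvay\rangle + \langle \lhsy \bvay, \bdvapi\rangle + \langle \bvay, \bdvathy\rangle - \langle \bvay, \bdvamuy\rangle = 0$ and $\langle \lhsw \bvaw, \bdvapi\rangle + \langle \bvaw, \bdvathw\rangle - \langle \bvaw, \bdvamuw\rangle = 0$. Second, I use the complementarity conditions \eqref{constr:SL-MPCC-CC1}--\eqref{constr:SL-MPCC-CC2} to substitute $\langle \bvay, \bdvamuy\rangle = 0$, $\langle \bvay, \bdvathy\rangle = \ubvaytr \bdvathy$, and analogously for $\bvaw$. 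Adding the two resulting equations gives
\[
\langle \myPhi(\bvay), \bvay\rangle + \langle \lhsy \bvay + \lhsw \bvaw, \bdvapi\rangle + \ubvaytr \bdvathy + \ubvawtr \bdvathw = 0.
\]
Third, I invoke primal feasibility \eqref{constr:SL-MPCC-PF1} in block form, which yields $\langle \lhsy \bvay + \lhsw \bvaw, \bdvapi\rangle = \brhsBtr \bdvapi + \bdvapiotr \bvax$ since only the top block involves $\bvax$. Rearranging delivers the desired identity.

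An alternative route, essentially equivalent but more conceptual, would be to argue via strong duality. By Remark~\ref{remark:RefinedSlaterCondition}, the follower problem attains strong duality; any tuple feasible for \eqref{eq:SL-MPCC} satisfies the KKT system of the follower and is therefore both primal and dual optimal. The preceding lemma then gives $\myphi_\text{p}(\bvay) = \myphi_\text{d}^\bvax(\bdvapi,\bdvamu,\bdvath)$, and substituting $\bvay = \myPhi^{-1}(\bdvamuy - \bdvathy - \lhsytr \bdvapi)$ from \eqref{constr:SL-MPCC-Stnry1} into \eqref{constr:LagrDual-Obj} cancels the $\myphi_\text{p}$ term and recovers the identity above.

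The hard part is essentially only bookkeeping: the partitioning between $(\bdvapio,\bdvapii)$ and the corresponding blocks $(\lhsyo,\lhsyi)$, $(\lhswo,\lhswi)$ must be tracked carefully so that the term $\bdvapiotr \bvax$ emerges from $\langle \lhsy \bvay + \lhsw \bvaw, \bdvapi\rangle$ precisely because only the ``$0$'' block of primal feasibility contains $\bvax$. No deep analytical input is required beyond what is already encoded in the preceding lemma; Assumptions \ref{assump:JacobSymm}--\ref{assump:FollowerFeas} enter only implicitly through Remarks \ref{remark:CvxFollower}--\ref{remark:RefinedSlaterCondition}, which guarantee the well-definedness of the dual variables and the validity of strong duality used to justify the reformulation.
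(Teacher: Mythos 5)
Your proposal is correct, and your primary (direct algebraic) route is genuinely different from the paper's proof, while your ``alternative route'' is essentially the paper's argument verbatim. The paper proceeds by first deriving the explicit Lagrangian dual \eqref{eq:LagrDual} of the follower problem, invoking strong duality (via Slater's condition, Remark~\ref{remark:RefinedSlaterCondition}) to write $\myphi_\text{p}(\bvay)=\myphi_\text{d}^{\bvax}(\bdvapi,\bdvamu,\bdvath)$ at any feasible point, and then using the stationarity condition \eqref{constr:SL-MPCC-Stnry1} to collapse $\myPhi^{-1}(\bdvamuy-\bdvathy-\lhsytr\bdvapi)$ back to $\bvay$, which cancels the $\myphi_\text{p}$ terms and isolates $\bdvapiotr\bvax$. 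Your primary route reaches the same pointwise identity by pairing \eqref{constr:SL-MPCC-Stnry1} with $\bvay$ and \eqref{constr:SL-MPCC-Stnry2} with $\bvaw$, invoking complementarity \eqref{constr:SL-MPCC-CC1}--\eqref{constr:SL-MPCC-CC2} and primal feasibility \eqref{constr:SL-MPCC-PF1}; this is in effect a direct derivation of the strong-duality identity from the KKT system, and it is more self-contained: it needs neither the invertibility of $\myPhi$ nor the explicit form of the dual objective from the preceding lemma, only the constraints already present in \eqref{eq:SL-MPCC}. What the paper's route buys in exchange is that the dual problem \eqref{eq:LagrDual} is reused elsewhere (notably in the weak-duality comparison of the two relaxations in Section~\ref{sec:KKTvsDUAL}), so deriving it once and quoting strong duality keeps the later arguments uniform. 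One small point to make explicit in your first route: when a component $\ubvay_i$ or $\ubvaw_i$ is infinite, the complementarity condition forces the corresponding multiplier $\bdvathyind{i}$ or $\bdvathwind{i}$ to vanish, so the substitution $\langle\bvay,\bdvathy\rangle=\ubvaytr\bdvathy$ is valid under the convention that those products are zero --- the same convention the paper adopts in Section~\ref{sec:AffineCase}.
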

\begin{proof}
Recall that for any given leader decision $\bvax$, the follower decisions $(\bvay;\bdvapi,\bdvamu,\bdvath)$ must be primal-dual optimal. 
From Remark \ref{remark:RefinedSlaterCondition}, strong duality holds and therefore equality 
$\myphi_\text{p}(\bvay) = \myphi_\text{d}^{\bvax}(\bdvapi,\bdvamu,\bdvath)$
must also hold.
Substituting the expression for $\myphi_\text{d}^{\bvax}(\bdvapi,\bdvamu,\bdvath)$ from \eqref{constr:LagrDual-Obj} and rearranging gives
\begin{align}\label{eq:LL-Duality}
& \begin{aligned}[t] \bdvapiotr \bvax = \myphi_\text{p}\left(\myPhi^{-1}(\bdvamuy-\bdvathy-\lhsytr \bdvapi)\right) - \left\langle \bdvamuy-\bdvathy-\lhsytr \bdvapi,\ \myPhi^{-1}(\bdvamuy-\bdvathy-\lhsytr \bdvapi) \right\rangle \\ 
- \ubvawtr\bdvathw - \ubvaytr\bdvathy - \brhsBtr \bdvapi - \myphi_\text{p}(\bvay). 
\end{aligned}
\end{align}
Using \eqref{eq:LL-Duality} to substitute the bilinear term $\bdvapiotr \bvax$ in \eqref{constr:SL-MPCC-Obj}, we reformulate \eqref{eq:SL-MPCC} as
\begin{subequations} \label{eq:SL-MPCC2}
\begin{align}
\max_{\substack{\bvaz,\bvax,\bvay,{\bvaw},\\ \bdvapi, \bdvamu,\bdvath}}\;\; 
& \begin{aligned}[t]
\myphi_\text{p}\left(\myPhi^{-1}(\bdvamuy-\bdvathy-\lhsytr \bdvapi)\right) - \left\langle \bdvamuy-\bdvathy-\lhsytr \bdvapi,\ \myPhi^{-1}(\bdvamuy-\bdvathy-\lhsytr \bdvapi) \right\rangle \\
- \myphi_\text{p}(\bvay) - \ubvawtr \bdvathw - \ubvaytr \bdvathy - \brhsBtr \bdvapi -\bcoxtr \bvax - \bcoztr   \bvaz \end{aligned}\label{constr:SL-MPCC2-Obj}\\
\st\ & \eqref{constr:SL-MPCC-Leader1}-\eqref{constr:SL-MPCC-CC2}.
\end{align}
\end{subequations}
Using \eqref{constr:SL-MPCC-Stnry1} to substitute $\bdvamuy-\bdvathy-\lhsytr \bdvapi$ for $\myPhi(\bvay)$ in \eqref{constr:SL-MPCC2-Obj} gives \eqref{eq:SL-LagrDual-Obj}. 
\end{proof}

\begin{remark}
Formulations \eqref{eq:SL-MPCC} and \eqref{eq:SL-LagrDual} differ only in their objective functions. 
\end{remark}

{Similar to the relaxation of the KKT-based formulation we introduced in Section~\ref{sec:SingleLevel_Reform1}, we now consider the} relaxation of \eqref{eq:SL-LagrDual} obtained after relaxing its complementarity constraints \eqref{constr:SL-MPCC-CC1}-\eqref{constr:SL-MPCC-CC2} and integrality constraints \eqref{constr:SL-MPCC-Leader1}:
\begin{subequations}\label{eq:SL-LagrDual-Relax}
\begin{align}
\optval{\dual}{\rel}
= \max_{\substack{\bvaz,\bvax,\bvay,\bvaw\\ \bdvapi, \bdvamu,\bdvath}}\;\; &  - \left\langle \myPhi(\bvay),\ \bvay \right\rangle - \ubvawtr \bdvathw - \ubvaytr \bdvathy - \brhsBtr \bdvapi -\bcoxtr \bvax - \bcoztr   \bvaz 
\label{constr:SL-LagrDual-Relax-Obj}\\
\st\ & {(\bvaz,\bvax) \in \overline \feas,\; \eqref{constr:SL-MPCC-Leader2}-\eqref{constr:SL-MPCC-Stnry2}}. 
\end{align}
\end{subequations}

\subsection{On the strength of KKT-based and Duality-based relaxations}
\label{sec:KKTvsDUAL}

{We first establish that the relaxation bound from~\eqref{eq:SL-MPCC-Relax} cannot be smaller than the relaxation bound obtained from~\eqref{eq:SL-LagrDual-Relax}.}

\begin{lemma}
It holds that
$\optval{\dual}{\rel} \le \optval{\kkt}{\rel}$.
\end{lemma}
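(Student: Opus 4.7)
The plan is to show that, pointwise on the common feasible region of the two relaxations, the Duality-based objective \eqref{constr:SL-LagrDual-Relax-Obj} is dominated by the KKT-based objective \eqref{constr:SL-MPCC-Obj}. Since the relaxations \eqref{eq:SL-MPCC-Relax} and \eqref{eq:SL-LagrDual-Relax} are both obtained by dropping \eqref{constr:SL-MPCC-CC1}--\eqref{constr:SL-MPCC-CC2} and the integrality requirements in \eqref{constr:SL-MPCC-Leader1}, they share the same feasible set; hence any such pointwise inequality between the objectives immediately gives $\optval{\dual}{\rel} \leq \optval{\kkt}{\rel}$.

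Fix an arbitrary feasible point $(\bvaz,\bvax,\bvay,\bvaw,\bdvapi,\bdvamu,\bdvath)$ of \eqref{eq:SL-LagrDual-Relax}. The term $-\bcoxtr \bvax - \bcoztr \bvaz$ is common to both objectives, so it suffices to show
\[
-\langle \myPhi(\bvay),\bvay\rangle - \ubvawtr \bdvathw - \ubvaytr \bdvathy - \brhsBtr \bdvapi \;\le\; \bdvapiotr \bvax.
\]
The idea is to eliminate $\myPhi(\bvay)$ and $\brhsBtr \bdvapi$ using stationarity and primal feasibility. First, the stationarity constraint \eqref{constr:SL-MPCC-Stnry1} yields $\myPhi(\bvay) = \bdvamuy - \bdvathy - \lhsytr \bdvapi$, so
\[
-\langle \myPhi(\bvay),\bvay\rangle = -\langle \bdvamuy,\bvay\rangle + \langle \bdvathy,\bvay\rangle + \langle \bdvapi,\lhsy \bvay\rangle.
\]
Next, primal feasibility \eqref{constr:SL-MPCC-PF1} gives $\brhsBtr \bdvapi = \langle \bdvapi,\lhsy \bvay\rangle + \langle \bdvapi,\lhsw \bvaw\rangle - \bdvapiotr \bvax$, and the second stationarity condition \eqref{constr:SL-MPCC-Stnry2} gives $\langle \bdvapi,\lhsw \bvaw\rangle = \langle \bdvamuw - \bdvathw,\bvaw\rangle$.

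Substituting these expressions and collecting terms, the difference between the KKT-based and Duality-based objectives reduces to
\[
\langle \bdvamuy,\bvay\rangle + \langle \ubvay - \bvay,\bdvathy\rangle + \langle \bdvamuw,\bvaw\rangle + \langle \ubvaw - \bvaw,\bdvathw\rangle,
\]
which is precisely the sum of the four inner products appearing in the dropped complementarity constraints \eqref{constr:SL-MPCC-CC1}--\eqref{constr:SL-MPCC-CC2}. Each of these inner products is nonnegative because $\bdvamu,\bdvath \ge 0$ by \eqref{constr:SL-MPCC-DF} and $0 \le \bvay \le \ubvay$, $0 \le \bvaw \le \ubvaw$ by \eqref{constr:SL-MPCC-PF2}. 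This proves the pointwise inequality and hence the claim.

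The proof is conceptually short; the only subtlety is the bookkeeping in substituting the stationarity and primal-feasibility relations cleanly enough that the residual collapses into a manifestly nonnegative sum. It is worth noting that this residual vanishes exactly when the complementarity constraints hold, which is consistent with Theorem~\ref{theorem:SL-Reform2} (the two objectives agree on the non-relaxed feasible region via strong duality).
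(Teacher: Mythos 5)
Your proof is correct and reaches the same conclusion the paper does, namely that the two relaxations share a feasible set and the Duality-based objective is pointwise dominated by the KKT-based one; the paper gets the pointwise inequality by invoking weak duality of the follower problem ($\myphi_\text{p}(\bvay)\ge\myphi_\text{d}^{\bvax}(\bdvapi,\bdvamu,\bdvath)$) and then substituting \eqref{constr:SL-MPCC-Stnry1}, whereas you unpack that same inequality by direct substitution of \eqref{constr:SL-MPCC-PF1}, \eqref{constr:SL-MPCC-Stnry1}, and \eqref{constr:SL-MPCC-Stnry2}, exhibiting the gap as the nonnegative complementarity residual. This is essentially the paper's argument with the weak-duality step made explicit (and with the bonus of showing the gap vanishes exactly when \eqref{constr:SL-MPCC-CC1}--\eqref{constr:SL-MPCC-CC2} hold), so no issues.
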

\begin{proof}
    {C}onstraints \eqref{constr:SL-MPCC-PF1}-\eqref{constr:SL-MPCC-Stnry2} ensure primal-dual feasibility of the follower problem. 
    Weak duality then implies that $\myphi_\text{p}(\bvay) \geq \myphi_\text{d}^{\bvax}(\bdvapi,\bdvamu,\bdvath)$ holds for both \eqref{eq:SL-MPCC-Relax} and \eqref{eq:SL-LagrDual-Relax}.
    Thus,
\begin{align}\label{eq:LL-WeakDuality}
     & 
    \begin{array}{lll} 
    \bdvapiotr \bvax &\overset{\eqref{constr:LagrDual-Obj}}{\geq}& \myphi_\text{p}\left(\myPhi^{-1}(\bdvamuy-\bdvathy-\lhsytr \bdvapi)\right) \\ 
    &&- \left\langle \bdvamuy-\bdvathy-\lhsytr \bdvapi,\ \myPhi^{-1}(\bdvamuy-\bdvathy-\lhsytr \bdvapi) \right\rangle \\ 
    &&- \ubvawtr\bdvathw - \ubvaytr\bdvathy - \brhsBtr \bdvapi - \myphi_\text{p}(\bvay).
    \end{array}
    \end{align}
Using constraint \eqref{constr:SL-MPCC-Stnry1} to substitute $\bdvamuy-\bdvathy-\lhsytr \bdvapi$ for $\myPhi(\bvay)$ in \eqref{eq:LL-WeakDuality} yields 
\begin{align*}
& \bdvapiotr \bvax\ {\geq} - \left\langle \myPhi(\bvay),\ \bvay \right\rangle - \ubvawtr \bdvathw - \ubvaytr \bdvathy - \brhsBtr \bdvapi, 
\end{align*}
which shows that 
$\optval{\kkt}{\rel} \ge \optval{\dual}{\rel}$.
\end{proof}

We argue next that the difference between 
$\optval{\dual}{\rel}$ and $\optval{\kkt}{\rel}$ can be significant, even for the case where the cost vector $\myPhi$ is affine. 

\subsubsection{Strength of relaxations when  $\Phi$ is affine}
\label{sec:AffineCase}

Consider $\myPhi(\bvay) = {\mathcal{R}}\bvay+{r}$ where
{${\mathcal{R}}$ is a positive definite matrix} so that Assumptions \ref{assump:JacobSymm} and \ref{assump:Unique} are satisfied. 
The follower objective is $\myphi_{p}(\bvay)=\frac{1}{2}\bvay\tr {\mathcal{R}}\bvay+{r}\tr \bvay$ and \eqref{constr:SL-MPCC-Stnry1} is affine. 

{
We next introduce some notation. Let $I^\bvay$ and $I^\bvaw$ represent the identity matrices of dimensions $\dim{\bvay}$ and $\dim{\bvaw}$, respectively. 
{
For a matrix $M$ with $\dim{\bvay}$ columns (e.g. $\lhsy$, ${\mathcal{R}}$, $I^\bvay$), we define $M_\setUnbd$ (resp. $M_\setBd$) to be the submatrix obtained by selecting the columns of $M$ whose indices belong (resp. do not belong) to the set $\{i \in [\dim{\bvay}]:\ubvay_i = \infty\}$.
Likewise, for a matrix $N$ with $\dim{\bvaw}$ columns (e.g. $\lhsw$, $I^\bvaw$), we define submatrices $N_\setUnbd$ and $N_\setBd$ based on $\{i \in [\dim{\bvaw}]:\ubvaw_i = \infty\}$.}
We partition vectors $v\in\Re^{\dim{\bvay}}$ or $\Re^{\dim{\bvaw}}$ similarly into subvectors $v_\setUnbd$ and $v_\setBd$. 
Complementarity constraints \eqref{constr:SL-MPCC-CC1}-\eqref{constr:SL-MPCC-CC2} imply that $\bdvath_\setUnbd^\bvay = 0$ and $\bdvath_\setUnbd^\bvaw = 0$.
Hence, the constraint set \eqref{constr:SL-MPCC-PF1}-\eqref{constr:SL-MPCC-Stnry2} in relaxations \eqref{eq:SL-MPCC-Relax} and \eqref{eq:SL-LagrDual-Relax} becomes
\begin{subequations}\label{constrs:AffineCase}
    \begin{align}
    \label{constr:SLv2-MPCC-PF1} &
    {\begin{pmatrix}
     G_\setUnbd & H_\setUnbd & 0 & 0 & 0 & 0 & 0\\
    {\mathcal{R}}_\setUnbd & 0 & \lhsytr & -I^\bvay & 0 & I_\setBd^\bvay & 0\\
    0 & 0 & \lhswtr & 0 & -I^\bvaw & 0 & I_\setBd^\bvaw
    \end{pmatrix}
    \begin{pmatrix}
    {\bvay}_\setUnbd \\ {\bvaw}_\setUnbd \\ \bdvapi \\ {\bdvamu}^\bvay \\ {\bdvamu}^\bvaw \\ 
    {\bdvath}_\setBd^\bvay \\ {\bdvath}_\setBd^\bvaw
    \end{pmatrix} +
     \begin{pmatrix}
     G_\setBd & H_\setBd \\
    {\mathcal{R}}_\setBd & 0 \\
    0 & 0
    \end{pmatrix}
    \begin{pmatrix}
    {\bvay}_\setBd \\ {\bvaw}_\setBd
    \end{pmatrix}
    = {\begin{pmatrix}
        \rhsB + \begin{pmatrix} \bvax \\ 0 \end{pmatrix} \\
        -{r} \\ 0
    \end{pmatrix}}
    } \\
    \label{constr:SLv2-MPCC-PF2} & \bvay \geq 0,\;\; \bvaw \geq 0,\;\; \bvay_\setBd \leq \ubvay_\setBd,\;\; \bvaw_\setBd \leq \ubvaw_\setBd  \\
    \label{constr:SLv2-MPCC-DF} & \bdvamu\geq0,\;\; {{\bdvath}_\setBd^\bvay\geq 0,\;\; {\bdvath}_\setBd^\bvaw \geq 0}{.}
    \end{align}
\end{subequations}
}
Relaxations \eqref{eq:SL-MPCC-Relax} and \eqref{eq:SL-LagrDual-Relax} can thus be written as 
\begin{subequations}\label{eq:SLv2-MPCC-Relax}
\begin{align}
\optval{\kkt}{\rel}
= {\max_{\substack{\bvaz,\bvax,\bvay,\bvaw\\ \bdvapi, \bdvamu, {\bdvath}_\setBd^\bvay, {\bdvath}_\setBd^\bvaw}}}\;\; &  \bdvapiotr \bvax - \bcoxtr \bvax - \bcoztr   \bvaz \\
\st\qquad & \ {(\bvaz,\bvax) \in \overline \feas,\; \eqref{constr:SL-MPCC-Leader2}-\eqref{constr:SL-MPCC-Leader3},\; \eqref{constr:SLv2-MPCC-PF1}-\eqref{constr:SLv2-MPCC-DF}
}
\end{align}
\end{subequations}
and
\begin{subequations}\label{eq:SLv2-LagrDual-Relax}
\begin{align}
\optval{\dual}{\rel}
= {\max_{\substack{\bvaz,\bvax,\bvay,\bvaw\\ \bdvapi, \bdvamu, {\bdvath}_\setBd^\bvay, {\bdvath}_\setBd^\bvaw}}}\;\; &  - \bvay\tr {\mathcal{R}}\bvay - {r}\tr \bvay 
{- \overline{\bvaw}_\setBd\tr {\bdvath}_\setBd^\bvaw - \overline{\bvay}_\setBd\tr {\bdvath}_\setBd^\bvay}
- \brhsBtr \bdvapi -\bcoxtr \bvax - \bcoztr   \bvaz 
\label{constr:SLv2-LagrDual-Relax-Obj}\\
\st\qquad & \ {(\bvaz,\bvax) \in \overline \feas,\; \eqref{constr:SL-MPCC-Leader2}-\eqref{constr:SL-MPCC-Leader3},\; \eqref{constr:SLv2-MPCC-PF1}-\eqref{constr:SLv2-MPCC-DF}}.
\label{constr:SLv2-LagrDual-Relax-Constr}
\end{align}
\end{subequations} 

\begin{lemma}\label{lemma:Unbd_ReformI}
{ Let $(\dot{\bvaz}, \dot{\bvax})\in \overline \feas$ be such that \eqref{constr:SL-MPCC-Leader2} is satisfied. 
 Let $(\mymark{\bvay}, \mymark{\bvaw}, \markbdvapi, \mymark{\bdvamu}^\bvaw, \mymark{\bdvamu}^\bvaw,\mymark{\bdvath}_\setBd^\bvay,\mymark{\bdvath}_\setBd^\bvaw)$ be a nonzero nonnegative vector satisfying $\mymark{\bvay}_\setBd=0$, $\mymark{\bvaw}_\setBd=0$, and
 \begin{align}\label{eq:SystemOfEqns_UnbdRay}
    \begin{pmatrix}
     G_\setUnbd & H_\setUnbd & 0 & 0 & 0 & 0 & 0\\
    {\mathcal{R}}_\setUnbd & 0 & \lhsytr & -I^\bvay & 0 & I_\setBd^\bvay & 0\\
    0 & 0 & \lhswtr & 0 & -I^\bvaw & 0 & I_\setBd^\bvaw
    \end{pmatrix}
    \begin{pmatrix}
        \mymark{\bvay}_\setUnbd \\ \mymark{\bvaw}_\setUnbd \\ \markbdvapi \\ \mymark{\bdvamu}^\bvay \\ \mymark{\bdvamu}^\bvaw \\ 
        \mymark{\bdvath}_\setBd^\bvay \\ \mymark{\bdvath}_\setBd^\bvaw
    \end{pmatrix} = 0,
\end{align}
\ie this vector is a \textbf{ray} corresponding to constraint set 
{
\eqref{constr:SL-MPCC-Leader3}, \eqref{constr:SLv2-MPCC-PF1}-\eqref{constr:SLv2-MPCC-DF}.
}
If $\markbdvapiotr \dot\bvax>0$, 
then $\optval{\kkt}{\rel} =\infty$, \ie relaxation \eqref{eq:SLv2-MPCC-Relax} is unbounded.}
\end{lemma}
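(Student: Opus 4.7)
The plan is to exhibit an unbounded sequence of feasible solutions to relaxation~\eqref{eq:SLv2-MPCC-Relax} along which the objective grows without bound. I would proceed in three stages.

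First, I would establish a starting feasible point for relaxation~\eqref{eq:SLv2-MPCC-Relax} at the prescribed leader decision $(\dot{\bvaz},\dot{\bvax})$. Assumption~\ref{assump:FollowerFeas} provides a primal optimal pair $(\dot{\bvay},\dot{\bvaw})\in\mathcal{Y}(\dot{\bvax})$ for the follower, and Remark~\ref{remark:RefinedSlaterCondition} (Slater + convexity) yields a dual optimal triple $(\dot{\bdvapi},\dot{\bdvamu},\dot{\bdvath})$. By primal-dual optimality, these vectors jointly satisfy constraints \eqref{constr:SL-MPCC-PF1}--\eqref{constr:SL-MPCC-Stnry2}, i.e. the system \eqref{constrs:AffineCase} after eliminating $\bdvath^{\bvay}_\setUnbd$ and $\bdvath^{\bvaw}_\setUnbd$, which by stationarity are necessarily zero. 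The sign requirement $\dot{\bdvapi}\geq 0$ from \eqref{constr:SL-MPCC-Leader3} can be taken as an implicit feasibility requirement for the leader decision; otherwise the relaxation is vacuously infeasible at $(\dot{\bvaz},\dot{\bvax})$ and the statement need not be invoked.

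Second, I would perturb only the follower-side coordinates. For $t\geq 0$, define
\begin{equation*}
    (\bvay^{(t)},\bvaw^{(t)},\bdvapi^{(t)},\bdvamu^{(t)},\bdvath^{(t)}_\setBd) := (\dot{\bvay},\dot{\bvaw},\dot{\bdvapi},\dot{\bdvamu},\dot{\bdvath}_\setBd) + t\,(\mymark{\bvay},\mymark{\bvaw},\markbdvapi,\mymark{\bdvamu},\mymark{\bdvath}_\setBd),
\end{equation*}
leaving $(\dot{\bvaz},\dot{\bvax})$ unchanged. The linear equalities \eqref{constr:SLv2-MPCC-PF1} are preserved because the ray satisfies the homogeneous system \eqref{eq:SystemOfEqns_UnbdRay}. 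Nonnegativity of $\bvay,\bvaw,\bdvamu,\bdvath_\setBd^\bvay,\bdvath_\setBd^\bvaw$ and of $\bdvapi$ is preserved because the ray is nonnegative and $t\geq 0$. The upper bounds $\bvay_\setBd\leq\ubvay_\setBd$ and $\bvaw_\setBd\leq\ubvaw_\setBd$ in \eqref{constr:SLv2-MPCC-PF2} continue to hold because the ray hypothesis forces $\mymark{\bvay}_\setBd=0$ and $\mymark{\bvaw}_\setBd=0$, so these coordinates are untouched by the perturbation. Hence each $t\geq 0$ yields a feasible point of \eqref{eq:SLv2-MPCC-Relax}.

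Third, I would evaluate the objective along this sequence. Since $\bvaz$ and $\bvax$ are fixed at $(\dot{\bvaz},\dot{\bvax})$, the objective of \eqref{eq:SLv2-MPCC-Relax} evaluated at the perturbed point equals
\begin{equation*}
    (\dot{\bdvapi}_0 + t\,\markbdvapio)^{\ltr}\dot{\bvax} - \bcoxtr\dot{\bvax} - \bcoztr\dot{\bvaz}
    = \big(\dot{\bdvapi}_0^{\ltr}\dot{\bvax} - \bcoxtr\dot{\bvax} - \bcoztr\dot{\bvaz}\big) + t\,\markbdvapiotr\dot{\bvax}.
\end{equation*}
Because $\markbdvapiotr\dot{\bvax}>0$ by hypothesis, letting $t\to\infty$ drives the objective to $+\infty$, establishing $\optval{\kkt}{\rel}=\infty$.

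The only delicate point is the existence of the starting feasible solution with $\dot{\bdvapi}\geq 0$; aside from this, the argument is mechanical: the ray respects every bound/sign constraint by construction, and the gain per unit $t$ in the bilinear term $\bdvapiotr\bvax$ is exactly the scalar $\markbdvapiotr\dot{\bvax}$ that the hypothesis declares positive.
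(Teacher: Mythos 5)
Your proposal is correct and follows essentially the same route as the paper: obtain a primal--dual feasible starting point at $(\dot{\bvaz},\dot{\bvax})$ from Assumptions~\ref{assump:Unique} and~\ref{assump:FollowerFeas}, translate along the ray (the paper uses a parameter $\rho>0$ where you use $t$), check that the homogeneous system, sign constraints, and the untouched bounded coordinates preserve feasibility, and observe that the objective gains exactly $t\,\markbdvapiotr\dot{\bvax}>0$ per unit step. Your explicit flag about needing $\dot{\bdvapi}\geq 0$ at the starting point is a fair observation that the paper glosses over, but it does not change the argument.
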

\begin{proof}
For the given {$(\dot{\bvaz},\dot{\bvax})$}, {Assumptions~\ref{assump:Unique} and \ref{assump:FollowerFeas} imply} that there exists $(\dot{\bvay},\dot{\bvaw})$  and corresponding dual variables $(\dot{ \bdvapi},\dot{\bdvamu},\dot{\bdvath})$ satisfying constraints \eqref{constr:SLv2-MPCC-PF1}-\eqref{constr:SLv2-MPCC-DF}.
For $\rho>0$, define the solutions
$(\bvax^{(\rho)}, \bvaz^{(\rho)}) = (\dot{\bvax}, \dot{\bvaz})$,
$\bvay^{(\rho)} = \dot{\bvay} + \rho \cdot \mymark{\bvay}$, 
$\bvaw^{(\rho)}=\dot{\bvaw} + \rho \cdot \mymark{\bvaw}$, 
$\bdvapi^{(\rho)}=\dot{\bdvapi}+\rho \cdot \mymark{\bdvapi}$, 
${\bdvamu^{\bvay}}^{(\rho)}={\dot{\bdvamu}^{\bvay}}+\rho \cdot \mymark{\bdvamu}^\bvay $,  
${\bdvamu^{\bvaw}}^{(\rho)}={\dot{\bdvamu}^{\bvaw}}+\rho  \cdot \mymark{\bdvamu}^\bvaw$,
{${{\bdvath}_\setBd^\bvay}^{(\rho)} = {\dot {\bdvath}_\setBd^\bvay} + \rho \cdot \mymark{\bdvath}_\setBd^\bvay$, and
${{\bdvath}_\setBd^\bvaw}^{(\rho)} = {\dot {\bdvath}_\setBd^\bvaw} + \rho \cdot \mymark{\bdvath}_\setBd^\bvaw$}.
It is easy to verify that these solutions satisfy 
\eqref{constr:SL-MPCC-Leader3}, \eqref{constr:SLv2-MPCC-PF1}-\eqref{constr:SLv2-MPCC-DF}.
Denote the objective function of the solution associated with $\rho$ by $\optval{\kkt}{\rho}$. 
Then, it can be verified that $\optval{\kkt}{\rho} = \optval{\kkt}{0} + \rho \markbdvapiotr \dot\bvax$.
Since $\markbdvapiotr \dot\bvax>0$, the optimal value grows without bound as $\rho \to \infty$, \ie $\optval{\kkt}{\rel}=\infty$.
\end{proof}

\begin{remark}
When its conditions are satisfied, Lemma~\ref{lemma:Unbd_ReformI} suggests that branch-and-bound will likely struggle in solving Formulation~\eqref{eq:SL-MPCC}, as the problem relaxation at the root node will be unbounded (barring success from generic cuts or pre-processing routines at otherwise bounding the objective.)
This has the potential to significantly slow down further search as branching decisions will be harder to make and many nodes will need to be explored before a reasonable upper bound is obtained.
\end{remark}

\begin{lemma}\label{lemma:BndRelaxII}
Assume $\brhsB=\bm{0}$. Then $\optval{\dual}{\rel}<\infty$, \ie relaxation \eqref{eq:SLv2-LagrDual-Relax} has a bounded optimal value.
\end{lemma}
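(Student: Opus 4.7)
The plan is to show that, with $\brhsB = \bm{0}$ substituted into objective \eqref{constr:SLv2-LagrDual-Relax-Obj}, every remaining term is individually bounded above over the feasible region of \eqref{eq:SLv2-LagrDual-Relax}, which is nonempty by Assumption~\ref{assump:FollowerFeas}. The critical observation is that $\brhsB=\bm{0}$ eliminates the term $-\brhsBtr\bdvapi$, so the unbounded dual multipliers $\bdvapi$ no longer appear in the objective at all. This is precisely what distinguishes the Duality-based relaxation from the KKT-based one: in \eqref{eq:SLv2-MPCC-Relax}, $\bdvapi$ enters the objective through the bilinear term $\bdvapiotr\bvax$, which can grow unbounded along recession directions as established in Lemma~\ref{lemma:Unbd_ReformI}.

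With $\bdvapi$ removed from the objective, I would bound the three remaining groups of terms separately. First, constraint \eqref{constr:SL-MPCC-Leader2} confines $(\bvaz,\bvax)$ to a bounded box, so $-\bcoxtr\bvax-\bcoztr\bvaz$ admits a finite upper bound. Second, because ${\mathcal{R}}$ is positive definite by the assumption of this section, the quadratic $-\bvay\tr{\mathcal{R}}\bvay-r\tr\bvay$ is strictly concave in $\bvay$ and is therefore bounded above by its unconstrained maximum, a finite quantity depending only on $r$ and ${\mathcal{R}}$. Third, the terms $-\overline{\bvaw}_\setBd\tr\bdvath_\setBd^\bvaw$ and $-\overline{\bvay}_\setBd\tr\bdvath_\setBd^\bvay$ are nonpositive, since the finite bound vectors $\overline{\bvaw}_\setBd, \overline{\bvay}_\setBd$ are nonnegative and dual feasibility \eqref{constr:SLv2-MPCC-DF} guarantees $\bdvath_\setBd^\bvay, \bdvath_\setBd^\bvaw\geq 0$.

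Summing these three upper bounds yields a finite upper bound on \eqref{constr:SLv2-LagrDual-Relax-Obj}, which delivers $\optval{\dual}{\rel}<\infty$. There is no deep obstacle in the argument; each piece follows from an elementary sign or concavity consideration. The conceptual point worth emphasizing is that under $\brhsB=\bm{0}$ the Duality-based reformulation retains a concave quadratic penalty in $\bvay$ but severs all coupling of the objective with the potentially unbounded multipliers $\bdvapi$ and $\bdvamu$, in sharp contrast with the behavior of the KKT-based relaxation exposed in Lemma~\ref{lemma:Unbd_ReformI}.
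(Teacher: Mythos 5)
Your proof is correct and follows essentially the same route as the paper: you drop the $-\brhsBtr\bdvapi$ term using $\brhsB=\bm{0}$, bound the leader cost terms over the box \eqref{constr:SL-MPCC-Leader2}, observe the $\bdvath$ terms are nonpositive, and bound the quadratic in $\bvay$ via positive definiteness of ${\mathcal{R}}$. The only cosmetic difference is that you phrase the last step as strict concavity of $-\bvay\tr{\mathcal{R}}\bvay-r\tr\bvay$ admitting a finite unconstrained maximum, whereas the paper invokes coercivity of $\bvay\tr{\mathcal{R}}\bvay+r\tr\bvay$ to get a finite global minimum; these are equivalent.
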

\begin{proof}
    The following terms in objective function \eqref{constr:SLv2-LagrDual-Relax-Obj} are bounded over the constraints 
   \eqref{constr:SLv2-LagrDual-Relax-Constr}: 
    ${- \overline{\bvaw}_\setBd\tr {\bdvath}_\setBd^\bvaw \leq 0}$, 
    ${- \overline{\bvay}_\setBd\tr {\bdvath}_\setBd^\bvay \leq 0}$,
    $-\bcoxtr \bvax\leq-\min_{0\leq\bvax\leq \ubvax} \bcoxtr \bvax$, and
    $-\bcoztr   \bvaz \leq -\min_{{0\leq\bvaz\leq \ubvaz}} \bcoztr   \bvaz$.
    For $\brhsB=\bm{0}$, we have that $\brhsBtr \bdvapi=0$. 
    Define $\widetilde \myphi_\text{p}(\bvay):=\bvay\tr {\mathcal{R}}\bvay+{r}\tr \bvay$.
    Since {${\mathcal{R}}$ is positive definite} then $\widetilde \myphi_\text{p}(\bvay)$ is coercive which implies that it has a global minimizer on $\Re^{\dim{\bvay}}$, \ie
    $\widetilde \myphi_\text{p}(\bvay)\geq \min_{\bvay'} \widetilde \myphi_\text{p}(\bvay') > -\infty$. 
    Thus, objective function \eqref{constr:SLv2-LagrDual-Relax-Obj} is bounded above over \eqref{constr:SLv2-LagrDual-Relax-Constr} as
    \begin{align*}
        & - \widetilde \myphi_\text{p}(\bvay) {- \overline{\bvaw}_\setBd\tr {\bdvath}_\setBd^\bvaw - \overline{\bvay}_\setBd\tr {\bdvath}_\setBd^\bvay} - \brhsBtr \bdvapi -\bcoxtr \bvax - \bcoztr   \bvaz \\ 
        & \leq -\textstyle\min_{\bvay'\in\Re^{\dim{\bvay}}} \widetilde \myphi_\text{p}(\bvay')-\textstyle \min_{0\leq\bvax\leq \ubvax} \bcoxtr \bvax -\textstyle \min_{{0\leq\bvaz\leq \ubvaz}} \bcoztr   \bvaz \;\;
        < \infty.
    \end{align*}
\end{proof}

\begin{remark}
    When $\brhsB=\bm{0}$, Lemma~\ref{lemma:BndRelaxII} establishes that even if there exists 
    {$(\mymark{\bvay}, \mymark{\bvaw}, \markbdvapi, \mymark{\bdvamu}^\bvaw, \mymark{\bdvamu}^\bvaw, \mymark{\bdvath}_\setBd^\bvay, \mymark{\bdvath}_\setBd^\bvaw)\geq0$} satisfying \eqref{eq:SystemOfEqns_UnbdRay} and such that $\markbdvapiotr \dot\bvax>0$, 
    then relaxation \eqref{eq:SLv2-LagrDual-Relax} is bounded, which is a significant advantage over relaxation \eqref{eq:SLv2-MPCC-Relax}.
\end{remark}

\begin{remark}
    Lemma~\ref{lemma:BndRelaxII} generalizes to  
    non-affine vector functions $\myPhi(\bvay)$ (satisfying Assumptions \ref{assump:JacobSymm}-\ref{assump:Unique}) 
    for which $\widetilde \myphi_\text{p}(\bvay):=\left\langle \myPhi(\bvay),\ \bvay \right\rangle$ is coercive. 
    An example is 
    $\myPhi(\bvay)=\big({1}/{\sqrt{1+\bvay_i^2}} + 2\bvay_i,\ {i\in \dim{\bvay}}\big)$ for which $\widetilde\myphi_\text{p}(\bvay) =\sum_{i=1}^{\dim{\bvay}}({\bvay_i}/{\sqrt{1+\bvay_i^2}} + 2\bvay_i^2)$ is coercive.    
\end{remark}

\subsubsection{Example}
\label{sec:ToyExample}

We next illustrate the difference in the strength of the two formulations presented above on a simple instance of the {equilibrium facility location (EFL)} problem that we discuss in detail 
in Section~\ref{sec:EFL-Descrp}.
The instance we consider has variables 
$\bvaz \in \Re$, $\bvax \in \Re$, 
$\bvay{=\begin{pmatrix}\bvay_a & \bvay_b & \bvay_c & \bvay_d\end{pmatrix}\tr}\in\Re^4$ and has no variable  $\bvaw$. {
Further, we choose $\overline \feas=\left\{(\bvaz,\bvax) \in \Re^2 \ \big| \ \bvax -10\bvaz\leq 0\right\}$, $\mathcal{Z}=\mathbb{Z}$, and $\mathcal{X}=\Re$.
}
Bounds on the variables are 
$\ubvax=7.5$, {$\ubvaz=1$}, and $\ubvay=\infty$.
The constraints of the follower set are defined by
$\lhsyo = \begin{pmatrix} 1 & 0 & -1 & 0 \end{pmatrix}$, $\rhsBo=0$, $\lhsyi = \begin{pmatrix} -1 & 1 & 0 & -1 \end{pmatrix}$, and $\rhsBi=0$,
 whereas $\lhswo$ and $\lhswi$ are not defined since there are no variable $\bvaw$. 
The objective functions are defined through $\myPhi(\bvay)=\begin{pmatrix}\bvay_a + 10 & \bvay_b - 20 & \bvay_c + 10 & \bvay_d + 20\end{pmatrix}\tr$ and the cost parameters are 
$\bcox=0.5$, and $\bcoz=0.5$. 
The constraints of the KKT-based and Duality-based formulations are
\begin{subequations}
\begin{align}    
    & \bvax -10\bvaz\leq 0, \;\; \bvaz\in\{0,1\},\;\; 0 \leq \bvax \leq 7.5, \nonumber \\
    & {\bdvapi \geq 0} \label{constr:Example-Leader1} \\
    & \bvay_a - \bvay_c - \bvax = 0, \;\; -\bvay_a + \bvay_b - \bvay_d = 0, \nonumber\\
    & \bvay_a \geq 0, \;\; \bvay_b \geq 0,\;\; \bvay_c \geq 0, \;\; \bvay_d \geq 0 \\
    & \bdvamu_{a}\geq 0, \;\; \bdvamu_{b}\geq 0, \;\; \bdvamu_{c}\geq 0, \;\; \bdvamu_{d}\geq 0 \\
    &\bvay_a + 10 + \bdvapi_0 - \bdvapi_1 - \bdvamu_{a} = 0, \nonumber\\
    & \bvay_b - 20 + \bdvapi_1 - \bdvamu_{b} = 0, \nonumber \\
    & \bvay_c + 10 - \bdvapi_0 - \bdvamu_{c} = 0, \nonumber \\
    & \bvay_d + 20 - \bdvapi_1 - \bdvamu_{d} = 0 \\ 
    & \{\bvay_a,\bdvamu_a\}\; \text{ is\; SOS1},\;\; \{\bvay_b,\bdvamu_b\}\; \text{ is\; SOS1}, \nonumber\\
    & \{\bvay_c,\bdvamu_c\}\; \text{ is\; SOS1},\;\; \{\bvay_d,\bdvamu_d\}\; \text{ is\; SOS1}. \label{constr:Example-CC}
\end{align}
\end{subequations}
The KKT-based formulation has objective $\optval{\kkt}{} = \max \;\;(\bdvapi_0 - 0.5) \bvax - 0.5 \bvaz$ whereas the Duality-based formulation has objective $\optval{\dual}{}=
    \max\;\;  - \bvay_{a}^2 - \bvay_b^2 - \bvay_c^2 - \bvay_d^2 - 10 \bvay_a + 20 \bvay_b  - 10 \bvay_c - 20 \bvay_d - 0.5 \bvax - 0.5 \bvaz$.
In this example, \eqref{eq:SystemOfEqns_UnbdRay} reduces to {$\begin{pmatrix}
    \lhsy & 0 & 0\\
    {\mathcal{R}} & \lhsytr & -I \\
\end{pmatrix}
\begin{pmatrix}
        \mymark{\bvay} \\ \markbdvapi \\ \mymark{\bdvamu}
    \end{pmatrix} = 0$}
where $\lhsy =\begin{pmatrix} \ 1 & \ 0 & -1 & \ 0 \\ -1 & \ 1 & \ 0 & -1 \end{pmatrix}$ and ${\mathcal{R}} = I$.    
The vectors $\mymark{\bvay}=\begin{pmatrix} 1 & 1 & 1 & 0 \end{pmatrix}^\top$, 
$\widetilde\bdvapi=\begin{pmatrix} 1 & 0 \end{pmatrix}^\top$, and $\mymark{\bdvamu}=\begin{pmatrix} 2 & 1 & 0 & 0 \end{pmatrix}^\top$ satisfy the sufficient condition for unboundedness of relaxation \eqref{eq:SL-MPCC-Relax} given in Lemma~\ref{lemma:Unbd_ReformI}. 
Further, since $\widetilde \myphi_\text{p}(\bvay)=\bvay_{a}^2 + \bvay_b^2 + \bvay_c^2 + \bvay_d^2 + 10 \bvay_a -20 \bvay_b + 10 \bvay_c + 20 \bvay_d$ is coercive and $\brhsB=\bm{0}$, the sufficient conditions for boundedness of relaxation \eqref{eq:SL-LagrDual-Relax} given in Lemma~\ref{lemma:BndRelaxII} are also satisfied. 

{Solving this example by branch-and-bound with GUROBI (v9.5.2) shows that it has an optimal value of $10.78125$. 
The Duality-based formulation has a root relaxation bound of 11.12347 whereas the root relaxation of {the} KKT-based formulation is unbounded. 
Further, 31 branch-and-bound nodes are required to solve the KKT-based formulation  
whereas the Duality-based formulation is solved at the root node.}

\section{Numerical experiments}
\label{sec:NumericExp}

In this section, we study the performance of the single-level reformulations described in Section~\ref{sec:SingleLevel} on two applications.
In Section~\ref{sec:EFL-Descrp}, we consider an equilibrium facility location (EFL) problem on networks.
In Section~\ref{sec:RGU-Descrp}, we consider a location problem for {renewable generation units planning} (RGUP) in distribution networks under uncertainty. 
The computational details and test instances used for the two applications are described in Sections~\ref{sec:EFL-InstGen} and \ref{sec:RGU-InstGen}, respectively. 
The results and insights gained from the computational experiments are discussed in Sections~\ref{sec:EFL-ComputResults1}, \ref{sec:EFL-ComputResults2}, and~\ref{sec:RGU-ComputResults1}.
{For EFL, we test a total of 60 medium-sized and 20 large-sized instances. 
Moreover, for {RGUP}, we test a total of 20 large-sized difficult IEEE instances from the power systems literature.}

\subsection{Application 1: {EFL} on networks} 
\label{sec:EFL-Descrp}

Consider a directed network $G=(\nodes,\arcs)$ \big(where $\nodes$ is the set of nodes and $\arcs$ is the set of arcs\big) with existing demand and supply nodes for a single commodity denoted as $\nodesD \subseteq \nodes$ and $\nodesS\subseteq \nodes$, respectively. 
A leader firm wishes to locate production facilities at a subset of a set of potential nodes, say $\nodeso \subseteq \nodes$, of the network and determine their production levels subject to capacity constraints with the goal of maximizing profit. 
Let $\bvaz=(\bvaz_i,\ i\in \nodeso)$ denote the vector of binary decisions of locating a production facility at $i$ {having opening cost of $\conewz_i$} and let $\bvaq=(\bvaq_i,\ i\in\nodeso)$ denote the vector of production quantities at facility $i$ with unit production cost of $\coq_i$ and production capacity of $\ubvaq_i$ for $i\in\nodeso$.
The total capacity budget for production facilities is $\vaqmax$.
{Hence, the upper-level feasible set is described by $\feas_\textrm{EFL} = 
\left\{
 (\bvaz,\bvaq) \in \Re^{2|\nodeso|} \ \big|\  
 0 \leq \bvaq_i \leq \ubvaq_i  \bvaz_i \; \forall i\in \nodeso, \;
 \bvaz \in \{0,1\}^{|\nodeso|},\;
  \mathbf{1}\tr \bvaq \leq \vaqmax
 \right\}
$.}

At the lower level, we let $\bvaflo=(\bvaflo_{ij},\ (i,j)\in\arcs)$, $\bvadem=(\bvadem_i,\ i\in\nodesD)$, $\bvasup=(\bvasup_j,\ j\in\nodesS)$ be the vectors of flow, demand, and supply in the network, respectively. 
Assuming competition between new firms and those already in place, the production decisions $\bvaq$ change the total supply of commodity in the market and impact equilibrium prices and flow in the network. 
The resulting commodity flows and their corresponding demand and supply quantities correspond to a new competitive equilibrium {denoted by $(\bvaflo^*,\bvadem^*,\bvasup^*)$} which is obtained by solving the variational inequality 
\begin{align*}
    \langle \mybalpha(\bvaflo^*), \bvaflo-\bvaflo^*\rangle - \langle \mybbeta(\bvadem^*),\bvadem-\bvadem^*\rangle + \langle \mybgamma(\bvasup^*), \bvasup-\bvasup^*\rangle \geq 0, \quad \forall (\bvaflo, \bvadem, \bvasup) \in \Omega(\bvaq),
\end{align*}
where $\mybalpha(\bvaflo)$, $\mybbeta(\bvadem)$, $\mybgamma(\bvasup)$ are the inverse flow, inverse demand, and inverse supply cost vector functions, respectively; see~\cite{tobin1986spatial}. 
The set $\myOmega(\bvaq)$ is comprised of the network flow balance and non-negativity constraints
\begin{eqnarray*}
    \myOmega(\bvaq) = 
    \left\{
    \begin{array}{l|l}
    (\bvaflo,\bvadem,\bvasup)\; & 
    \begin{array}{ll} 
    & \idyof{} \bvaflo + \idyoD{} \bvadem - \idyoS{} \bvasup - \bvaq = 0, \\
    & \idyif{} \bvaflo + \idyiD{} \bvadem - \idyiS{} \bvasup = 0,\\
    & \bvaflo \geq 0,\; \bvadem \geq 0,\; \bvasup \geq 0 
    \end{array}
    \end{array} 
    \right\},
\end{eqnarray*}
where $(\idyof{})_{\nodeso\times \arcs}$, $(\idyoD{})_{\nodeso\times \nodesD}$, $(\idyoS{})_{\nodeso\times \nodesS}$ are the node-arc incidence, demand node incidence, and supply node incidence matrices corresponding to the nodes in $\nodeso$, respectively, and $\idyif{}$, $\idyiD{}$, $\idyiS{}$ are similar matrices corresponding to the nodes in $\nodesi=\nodes\setminus\nodeso$.
{The node-arc incidence matrix (here, $\idyof{}$ and $\idyif{}$) is commonly used in literature \cite[pgs.3-6]{van2010graph} to represent network flow constraints. 
For $i\in\nodeso$ and $j\in\nodesD$, the demand node incidence matrix  $\idyoD{}$ has entry $[\idyoD{}]_{ij}$ equal to $1$ if $i=j$ and equal to $0$ otherwise.
Other incidence matrices $\idyoS{},\idyiD{}$, $\idyiS{}$ are defined similarly.} {
The KKT-based and the Duality-based formulations of EFL can be obtained from \eqref{eq:SL-MPCC} and \eqref{eq:SL-LagrDual} using the 
specific constraints described above. Their explicit forms 
is in Appendix~\ref{sec:EFL-Appendix_SingleLevel}. 
}

\subsubsection{Instance generation}
\label{sec:EFL-InstGen}

We construct networks with different number $|\nodes|$ of nodes and number $|\arcs|$ of arcs. 
The arcs are randomly generated and the sets $\nodeso$, $\nodesD$, $\nodesS$ are randomly chosen with sizes $|\nodeso|=3|\nodes|/4$, \ $|\nodesD|=|\nodes|/2$, and $|\nodesS|=|\nodes|/2$. 
The lower-level cost vectors are chosen to be affine of the form 
{$[\mybalpha(\bvaflo)]_{ij} = \myalpha{ij}{1}\cdot \bvaflo_{ij} + \myalpha{ij}{0}\;\; \forall (i,j)\in \arcs$}, 
{$[\mybbeta(\bvadem)]_{i} = -\mybeta{i}{1}\cdot \bvadem_i + \mybeta{i}{0}\;\; \forall i\in \nodesD$}, 
{$[\mybgamma(\bvasup)]_{j} = \mygamma{j}{1}\cdot \bvasup_j + \mygamma{j}{0}\;\; \forall j\in \nodesS$},
where $\myalpha{ij}{1}>0$, $\mybeta{i}{1}>0$, and $\mygamma{j}{1}>0$ so that Assumptions \ref{assump:JacobSymm}-\ref{assump:Unique} are satisfied. 
Since $\brhsB=\bm{0}$ in EFL,  Lemma~\ref{lemma:BndRelaxII} establishes that the root relaxation of the {Duality-based formulation} is bounded and amounts to solving a strictly convex quadratic program. 
Define $\Unif(a,b)$ to be the continuous uniform distribution over the interval $(a,b)$. 
For each instance, the lower-level cost parameters are generated as $\myalpha{ij}{0}\sim\Unif(0,3)$, \ $\myalpha{ij}{1}\sim\Unif(0,2)$, \ $\mybeta{i}{0}\sim\Unif(1300,1500)$, \ 
$\mybeta{i}{1}\sim\Unif(3,4)$, \ $\mygamma{j}{0}\sim\Unif(1,2)$, \ and $\mygamma{j}{1}\sim\Unif(0,1)$.
The upper-level cost parameters are generated as $\conewz_i\sim\Unif(150,200)$ and $v_i \sim \Unif(3,5)$ whereas the capacity parameters are generated as $\ubvaq_i\sim\Unif(100,200)$ and $\vaqmax=350\cdot|\nodeso|/4$.

The performance of the two single-level formulations of EFL is evaluated on medium-sized and large-sized networks in Sections~\ref{sec:EFL-ComputResults1} and \ref{sec:EFL-ComputResults2}, respectively. 
A heuristic approach is described in Section~\ref{sec:EFL-ComputResults2} to aid branch-and-bound in solving large-sized network instances. 
All models are written in Python 3.7 and solved using GUROBI (v9.5.2) with parameters \code{TimeLimit=600} (for medium-sized networks),\ \code{TimeLimit=1200} (for large-sized networks),\ \code{MIPGap=0.01\%},\ \code{IntFeasTol=1e-9},\ \code{DualReductions=0}, and  \code{NonConvex=2}. 
The experiments are run on an Intel(R) Core(TM) i7-10510U CPU @ 1.80GHz machine with 16GB RAM. 

\subsubsection{Medium-sized networks}
\label{sec:EFL-ComputResults1}

In this section, we compare {KKT and Duality-based formulations} on networks with $|\nodes|\in\{10,20,30,40\}$ and $|\arcs|\in\{15,25,35,45,55,65\}$. 
For each $(|\nodes|,|\arcs|)$ pair, five random instances of network, cost, and capacity parameters are generated. 
Table~\ref{tab:EFL-B&BSummary-Results1} summarizes the results, where \textbf{I} is the instance number, \textbf{T} is the solution time (in seconds), 
\textbf{\#N} is the number of nodes explored during branch-and-bound,  \textbf{ObjVal} is the best objective value found, 
{\textbf{RootRelax} is the relaxation bound at the root node, and} \textbf{ObjBnd} is the best relaxation bound.
The entries marked as ``--"  in Table~\ref{tab:EFL-B&BSummary-Results1} denote cases where GUROBI fails to guarantee a finite upper bound on the problem within the time limit whereas {\code{UNBD} represents that the root relaxation was found to be unbounded}. 
We make the following observations:
\begin{enumerate}
    \item 
    In the time limit of 600 seconds, {the KKT-based formulation} only solves instances with approximately 10 to 20 nodes and 35 to 45 arcs whereas {the Duality-based formulation} solves all instances within the optimality gap tolerance of 0.01\%.
    \item 
    When considering only instances successfully solved by {the KKT-based formulation}, the runtime and explored nodes count is several orders of magnitude higher as compared to that of {the Duality-based formulation}. 
    In fact, {the Duality-based formulation} solves most instances at the root node and all of them within 1 second of computation time.
\end{enumerate}
{
We conjecture that the result of Lemma~\ref{lemma:Unbd_ReformI} holds for the EFL constraints {\eqref{constr:EFL-MPCC-Leader2}-\eqref{constr:EFL-EqbmPrice}} stated in Appendix~\ref{sec:EFL-Appendix_SingleLevel}, which explains the unboundedness of the root relaxation of {the KKT-based formulation} observed in Table~\ref{tab:EFL-B&BSummary-Results1}, and results in severe computational limitations, even for medium-sized networks. 
As already mentioned, the root relaxation of {the Duality-based formulation} is bounded by Lemma~\ref{lemma:BndRelaxII}.
} 
Hence, we restrict our attention to {the Duality-based formulation} for large-sized networks.

\begin{table}[!htbp]
  \centering
  \resizebox{1\textwidth}{!}{%
    \begin{tabular}{ccc|rrrrr|rrrrc}
          &       &       & 
          \multicolumn{5}{c|}{\multirow{2}{*}{\textbf{{KKT-based} formulation}}} 
          & \multicolumn{5}{c}{\multirow{2}{*}{\textbf{{Duality-based} formulation}}} \\ 
          &       &       & \multicolumn{5}{c|}{}    & \multicolumn{5}{c}{} \\
     \multirow{2}{*}{$|\nodes|$} & \multirow{2}{*}{$|\arcs|$} & \multirow{2}{*}{\textbf{I}} & \multirow{2}{*}{\textbf{T}} & \multirow{2}{*}{\textbf{ObjVal}} & \multirow{2}{*}{\textbf{ObjBnd}} & \multirow{2}{*}{{\textbf{RootRelax}}} & \multirow{2}{*}{\textbf{\#N}} & \multirow{2}{*}{\textbf{T}} & \multirow{2}{*}{\textbf{ObjVal}} & \multirow{2}{*}{\textbf{ObjBnd}} & \multirow{2}{*}{{\textbf{RootRelax}}} & \multirow{2}{*}{{\textbf{\#N}}} \\
    &       &       &       &       &       &       &       &       &       &       &       &  \\
    \hline 
    \addlinespace[1ex]
    \multirow{5}[1]{*}{10} & \multirow{5}[1]{*}{15} & 1     & 0.95  &      30,196.00  &      30,196.28  & \multicolumn{1}{c}{\code{UNBD}}  & 9.1E+03 & 0.03  &          30,196.00  &          30,198.82  &          30,557.77  & 1 \\
          &       & 2     & 0.20  &    189,463.31  &    189,463.31  & \multicolumn{1}{c}{\code{UNBD}}  & 1.5E+03 & 0.02  &        189,463.31  &        189,463.31  &        189,530.00  & 1 \\
          &       & 3     & 1.09  &    128,572.14  &    128,572.49  & \multicolumn{1}{c}{\code{UNBD}}  & 6.5E+03 & 0.03  &        128,572.14  &        128,572.14  &        130,609.20  & 1 \\
          &       & 4     & 1.56  &    196,712.65  &    196,712.85  & \multicolumn{1}{c}{\code{UNBD}}  & 8.7E+03 & 0.01  &        196,712.65  &        196,712.90  &        196,984.10  & 1 \\
          &       & 5     & 2.63  &    164,950.80  &    164,951.00  & \multicolumn{1}{c}{\code{UNBD}}  & 1.7E+04 & 0.02  &        164,950.70  &        164,950.80  &        165,166.30  & 1 \\
    \hline 
    \addlinespace[1ex]
    \multirow{5}[1]{*}{10} & \multirow{5}[1]{*}{25} & 1     & 10.62 &      38,675.81  &      38,675.81  & \multicolumn{1}{c}{\code{UNBD}}  & 5.4E+04 & 0.04  &          38,675.81  &          38,676.11  &          38,956.68  & 1 \\
          &       & 2     & 2.75  &    123,010.65  &    123,011.04  & \multicolumn{1}{c}{\code{UNBD}}  & 2.0E+04 & 0.04  &        123,010.65  &        123,019.24  &        123,560.10  & 1 \\
          &       & 3     & 15.39 &      97,375.16  &      97,379.06  & \multicolumn{1}{c}{\code{UNBD}}  & 4.5E+04 & 0.03  &          97,375.18  &          97,375.18  &          97,669.19  & 1 \\
          &       & 4     & 10.52 &    120,415.06  &    120,415.72  & \multicolumn{1}{c}{\code{UNBD}}  & 3.5E+04 & 0.02  &        120,415.06  &        120,416.22  &        120,649.40  & 1 \\
          &       & 5     & 9.86  &    111,571.93  &    111,574.53  & \multicolumn{1}{c}{\code{UNBD}}  & 4.7E+04 & 0.11  &        111,571.93  &        111,571.93  &        112,314.60  & 1 \\
   \hline 
   \addlinespace[1ex]
    \multirow{5}[1]{*}{10} & \multirow{5}[1]{*}{35} & 1     & 205.47 &      37,636.65  &      37,636.65  & \multicolumn{1}{c}{\code{UNBD}}  & 5.3E+05 & 0.08  &          37,636.65  &          37,636.65  &          37,921.54  & 1 \\
          &       & 2     & 24.85 &      95,544.40  &      95,549.08  & \multicolumn{1}{c}{\code{UNBD}}  & 1.2E+05 & 0.12  &          95,544.56  &          95,544.56  &          96,114.82  & 47 \\
          &       & 3     & 215.47 &      88,376.93  &      88,378.22  & \multicolumn{1}{c}{\code{UNBD}}  & 6.1E+05 & 0.09  &          88,376.94  &          88,377.46  &          88,841.80  & 1 \\
          &       & 4     & 195.94 &      93,342.01  &      93,342.01  & \multicolumn{1}{c}{\code{UNBD}}  & 6.2E+05 & 0.11  &          93,342.01  &          93,342.29  &          93,799.90  & 1 \\
          &       & 5     & 121.67 &    107,445.38  &    107,445.47  & \multicolumn{1}{c}{\code{UNBD}}  & 3.6E+05 & 0.07  &        107,445.38  &        107,447.96  &        108,080.90  & 1 \\
   \hline 
    \addlinespace[1ex]    
    \multirow{5}[1]{*}{20} & \multirow{5}[1]{*}{25}     & 1     & 19.54 &    498,845.56  &    498,845.68  & \multicolumn{1}{c}{\code{UNBD}}  & 1.1E+05 & 0.17  &        498,845.56  &        498,845.56  &        503,289.60  & 47 \\
          &       & 2     & 0.22  &    433,849.50  &    433,850.56  & \multicolumn{1}{c}{\code{UNBD}}  & 2.3E+03 & 0.07  &        433,849.50  &        433,852.40  &        434,171.40  & 1 \\
          &       & 3     & 2.45  &    504,330.80  &    504,332.56  & \multicolumn{1}{c}{\code{UNBD}}  & 2.0E+04 & 0.03  &        504,330.80  &        504,341.46  &        504,467.40  & 1 \\
          &       & 4     & 11.47 &    509,104.85  &    509,105.03  & \multicolumn{1}{c}{\code{UNBD}}  & 5.8E+04 & 0.11  &        509,104.85  &        509,131.45  &        509,685.00  & 1 \\
          &       & 5     & 6.35  &    499,875.99  &    499,885.33  & \multicolumn{1}{c}{\code{UNBD}}  & 4.7E+04 & 0.02  &        499,875.99  &        499,920.72  &        500,034.70  & 1 \\
    \hline 
    \addlinespace[1ex]   
    \multirow{5}[1]{*}{20} & \multirow{5}[1]{*}{35} & 1     & 17.23 &    203,934.06  &    203,936.43  & \multicolumn{1}{c}{\code{UNBD}}  & 8.1E+04 & 0.08  &        203,934.06  &        203,934.06  &        204,319.10  & 1 \\
          &       & 2     & 7.86  &    487,560.62  &    487,560.84  & \multicolumn{1}{c}{\code{UNBD}}  & 3.5E+04 & 0.05  &        487,514.05  &        487,560.62  &        489,254.00  & 1 \\
          &       & 3     & 13.87 &    306,256.69  &    306,260.87  & \multicolumn{1}{c}{\code{UNBD}}  & 6.9E+04 & 0.05  &        306,256.70  &        306,284.36  &        307,085.20  & 1 \\
          &       & 4     & 368.39 &    407,975.87  &    407,999.58  & \multicolumn{1}{c}{\code{UNBD}}  & 1.2E+06 & 0.06  &        407,975.87  &        407,978.52  &        408,289.50  & 1 \\
          &       & 5     & 66.09 &    327,594.00  &    327,615.84  & \multicolumn{1}{c}{\code{UNBD}}  & 3.0E+05 & 0.27  &        327,594.29  &        327,594.29  &        329,018.50  & 50 \\
     \hline 
    \addlinespace[1ex]  
    \multirow{5}[1]{*}{20} & \multirow{5}[1]{*}{45} & 1     & 311.67 &    142,070.13  &    142,074.87  & \multicolumn{1}{c}{\code{UNBD}}  & 1.2E+06 & 0.12  &        142,070.14  &        142,070.14  &        142,552.10  & 1 \\
          &       & 2     & 176.98 &    449,096.58  &    449,096.58  & \multicolumn{1}{c}{\code{UNBD}}  & 3.9E+05 & 0.07  &        449,096.58  &        449,096.58  &        460,733.40  & 1 \\
          &       & 3     & 600.02 &    282,641.07  &  \multicolumn{1}{c}{--}  & \multicolumn{1}{c}{\code{UNBD}}  & 1.2E+06 & 0.02  &        282,641.07  &        282,663.94  &        284,576.60  & 1 \\
          &       & 4     & 600.01 &    406,302.66  &  \multicolumn{1}{c}{--}  & \multicolumn{1}{c}{\code{UNBD}}  & 1.6E+06 & 0.34  &        407,886.81  &        407,906.37  &        408,920.40  & 1 \\
          &       & 5     & 381.94 &    318,349.93  &    318,379.28  & \multicolumn{1}{c}{\code{UNBD}}  & 1.1E+06 & 0.39  &        318,349.99  &        318,351.48  &        319,390.80  & 1 \\
    \hline 
    \addlinespace[1ex] 
    \multirow{5}[1]{*}{30} & \multirow{5}[1]{*}{35} & 1     & 600.01 &    710,038.90  &  \multicolumn{1}{c}{--}  & \multicolumn{1}{c}{\code{UNBD}}  & 2.5E+06 & 0.12  &        714,821.96  &        714,870.07  &        715,313.90  & 1 \\
          &       & 2     & 600.02 &      68,847.05  &  \multicolumn{1}{c}{--}  & \multicolumn{1}{c}{\code{UNBD}}  & 2.3E+06 & 0.28  &        201,915.23  &        201,915.50  &        204,304.70  & 1 \\
          &       & 3     & 497.01 &    662,744.86  &    662,796.14  & \multicolumn{1}{c}{\code{UNBD}}  & 1.8E+06 & 0.09  &        662,744.89  &        662,795.59  &        664,202.50  & 1 \\
          &       & 4     & 600.03 &    461,753.24  &  \multicolumn{1}{c}{--}  & \multicolumn{1}{c}{\code{UNBD}}  & 1.9E+06 & 0.10  &        609,964.41  &        609,964.41  &        611,133.60  & 1 \\
          &       & 5     & 271.54 &    863,008.81  &    863,014.06  & \multicolumn{1}{c}{\code{UNBD}}  & 7.7E+05 & 0.11  &        863,008.81  &        863,008.81  &        863,774.20  & 1 \\
    \hline 
    \addlinespace[1ex]  
    \multirow{5}[1]{*}{30} & \multirow{5}[1]{*}{45} & 1     & 600.03 &    664,063.25  &  \multicolumn{1}{c}{--}  & \multicolumn{1}{c}{\code{UNBD}}  & 1.6E+06 & 0.15  &        665,343.64  &        665,343.64  &        666,448.80  & 1 \\
          &       & 2     & 600.03 &    141,248.40  &  \multicolumn{1}{c}{--}  & \multicolumn{1}{c}{\code{UNBD}}  & 1.4E+06 & 0.20  &        299,694.53  &        299,694.53  &        301,966.10  & 30 \\
          &       & 3     & 600.03 &    235,747.70  &  \multicolumn{1}{c}{--}  & \multicolumn{1}{c}{\code{UNBD}}  & 1.6E+06 & 0.06  &        546,912.17  &        546,912.17  &        547,887.40  & 1 \\
          &       & 4     & 600.03 &    420,121.42  &  \multicolumn{1}{c}{--}  & \multicolumn{1}{c}{\code{UNBD}}  & 1.6E+06 & 0.22  &        585,424.48  &        585,424.48  &        586,656.40  & 130 \\
          &       & 5     & 600.01 &    663,150.48  &  \multicolumn{1}{c}{--}  & \multicolumn{1}{c}{\code{UNBD}}  & 1.3E+06 & 0.06  &        663,784.20  &        663,843.67  &        663,973.30  & 1 \\
    \hline 
    \addlinespace[1ex]  
    \multirow{5}[1]{*}{30} & \multirow{5}[1]{*}{55} & 1     & 600.04 &    428,639.53  &  \multicolumn{1}{c}{--}  & \multicolumn{1}{c}{\code{UNBD}}  & 1.5E+06 & 0.30  &        593,085.72  &        593,085.72  &        594,287.40  & 1 \\
          &       & 2     & 600.03 &    284,773.92  &  \multicolumn{1}{c}{--}  & \multicolumn{1}{c}{\code{UNBD}}  & 7.1E+05 & 0.68  &        291,639.64  &        291,658.74  &        294,233.00  & 1 \\
          &       & 3     & 600.03 &    364,145.32  &    399,681.36  & \multicolumn{1}{c}{\code{UNBD}}  & 1.8E+06 & 0.07  &        364,183.87  &        364,211.77  &        364,670.50  & 1 \\
          &       & 4     & 600.03 &    447,788.67  &  \multicolumn{1}{c}{--}  & \multicolumn{1}{c}{\code{UNBD}}  & 6.7E+05 & 0.24  &        457,940.33  &        457,940.33  &        459,872.10  & 120 \\
          &       & 5     & 600.01 &  \multicolumn{1}{c}{--}  &  \multicolumn{1}{c}{--}  & \multicolumn{1}{c}{\code{UNBD}}  & 1.1E+06 & 0.21  &        599,448.87  &        599,450.31  &        599,904.00  & 1 \\
    \hline 
    \addlinespace[1ex]
    \multirow{5}[1]{*}{40} & \multirow{5}[1]{*}{45} & 1     & 600.02 &    499,194.33  &  \multicolumn{1}{c}{--}  & \multicolumn{1}{c}{\code{UNBD}}  & 2.1E+06 & 0.07  &        878,326.04  &        878,333.86  &        878,761.70  & 1 \\
          &       & 2     & 600.04 &    329,915.35  &  \multicolumn{1}{c}{--}  & \multicolumn{1}{c}{\code{UNBD}}  & 1.7E+06 & 0.07  &        987,269.65  &        987,340.09  &        989,087.40  & 1 \\
          &       & 3     & 600.02 &    943,228.53  &  \multicolumn{1}{c}{--}  & \multicolumn{1}{c}{\code{UNBD}}  & 1.9E+06 & 0.11  &    1,150,729.89  &    1,150,731.25  &    1,151,642.00  & 1 \\
          &       & 4     & 600.03 &    366,596.14  &  \multicolumn{1}{c}{--}  & \multicolumn{1}{c}{\code{UNBD}}  & 2.0E+06 & 0.09  &    1,268,511.16  &    1,268,553.33  &    1,269,592.00  & 1 \\
          &       & 5     & 600.03 &    226,573.16  &  \multicolumn{1}{c}{--}  & \multicolumn{1}{c}{\code{UNBD}}  & 1.7E+06 & 0.21  &        710,472.47  &        710,472.47  &        722,389.30  & 1 \\
    \hline 
    \addlinespace[1ex] 
    \multirow{5}[1]{*}{40} & \multirow{5}[1]{*}{55} & 1     & 600.01 &    689,235.43  &  \multicolumn{1}{c}{--}  & \multicolumn{1}{c}{\code{UNBD}}  & 2.1E+06 & 0.20  &        873,272.97  &        873,310.71  &        874,346.50  & 1 \\
          &       & 2     & 600.02 &    354,451.47  &  \multicolumn{1}{c}{--}  & \multicolumn{1}{c}{\code{UNBD}}  & 1.4E+06 & 0.18  &        899,208.53  &        899,273.88  &        900,978.70  & 1 \\
          &       & 3     & 600.02 &    555,781.08  &  \multicolumn{1}{c}{--}  & \multicolumn{1}{c}{\code{UNBD}}  & 1.0E+06 & 0.21  &    1,059,822.79  &    1,059,897.36  &    1,061,789.00  & 1 \\
          &       & 4     & 600.04 &    465,578.18  &  \multicolumn{1}{c}{--}  & \multicolumn{1}{c}{\code{UNBD}}  & 1.6E+06 & 0.04  &    1,168,314.25  &    1,168,325.19  &    1,168,685.00  & 1 \\
          &       & 5     & 600.03 &    389,722.74  &  \multicolumn{1}{c}{--}  & \multicolumn{1}{c}{\code{UNBD}}  & 1.7E+06 & 0.07  &        711,535.60  &        711,537.96  &        723,845.50  & 36 \\     
    \hline 
    \addlinespace[1ex]     
    \multirow{5}[1]{*}{40} & \multirow{5}[1]{*}{65} & 1     & 600.02 &    394,876.13  &  \multicolumn{1}{c}{--}  & \multicolumn{1}{c}{\code{UNBD}}  & 1.6E+06 & 0.15  &        883,248.41  &        883,255.85  &        885,437.30  & 1 \\
          &       & 2     & 600.02 &    314,351.29  &  \multicolumn{1}{c}{--}  & \multicolumn{1}{c}{\code{UNBD}}  & 1.4E+06 & 0.22  &        899,579.67  &        899,607.05  &        901,270.80  & 57 \\
          &       & 3     & 600.01 &    566,669.73  &  \multicolumn{1}{c}{--}  & \multicolumn{1}{c}{\code{UNBD}}  & 1.3E+06 & 0.52  &        924,406.16  &        924,492.16  &        926,630.10  & 1 \\
          &       & 4     & 600.03 &    701,145.59  &  \multicolumn{1}{c}{--}  & \multicolumn{1}{c}{\code{UNBD}}  & 1.9E+06 & 0.11  &    1,139,441.56  &    1,139,441.56  &    1,139,999.00  & 1 \\
          &       & 5     & 600.02 &      92,212.37  &  \multicolumn{1}{c}{--}  & \multicolumn{1}{c}{\code{UNBD}}  & 9.7E+05 & 0.44  &        570,315.76  &        570,319.21  &        571,631.70  & 7 \\
    \hline      
    \end{tabular}%
    }
  \captionsetup{justification=centering}
  \caption{\scriptsize \textbf{{KKT-based vs Duality-based formulations}} on medium-sized EFL instances.}
  \label{tab:EFL-B&BSummary-Results1}
\end{table}%

\subsubsection{Large-sized networks} 
\label{sec:EFL-ComputResults2}

We conduct another set of experiments on networks with 100 nodes, \ie $|\nodes|=100$, and with a varying number $|\arcs|$ of arcs. 
The aim is to investigate the performance of the stronger {Duality-based} formulation on large-sized networks. 
Only one instance is considered for each value of $|\arcs|$. 
Specifically, $|\arcs|$ is gradually increased in steps of 470 by randomly adding new arcs to the previous set of arcs while keeping all other parameters unchanged. 
The results are summarized in the first half of Table~\ref{tab:EFL-B&BSummary-Results2}. {In Table~\ref{tab:EFL-B&BSummary-Results2}, \textbf{\%Gap} is the optimality gap at termination}. 

With {the Duality-based formulation}, GUROBI can handle instances with fewer than 3400 arcs but struggles to find a lower bound (\ie to find a feasible solution) for instances having 3800 arcs or more. 
The bounded objective of {the Duality-based formulation} at the root node ensures that an upper bound is found for all instances {when the root relaxation is solved within the time limit}.

We develop a simple rounding heuristic (\RH) that can be called during branch-and-bound for finding good quality feasible solutions. 
A pseudo-code of this procedure is given in Appendix \ref{sec:EFL-Appendix_RH}; see Algorithm~\ref{algo:roundingI}.
Given a fractional solution $(\widehat \bvaz, \widehat \bvaq)$ available at any point of the branch-and-bound procedure, this heuristic rounds the components of $\widehat \bvaz$ larger than rounding threshold $\RndThRH$ to $1$, sets the others to zeros, and decreases the production quantities of facilities that were just closed to $0$. 
Clearly, the solution $(\mymark \bvaz, \mymark \bvaq)$ so obtained satisfies the upper level constraints $\feas_\textrm{EFL}$.
For this vector of leader variables, the heuristic then solves the follower problem. 
Because we selected affine lower-level costs in our instances, the primal and dual problems are convex quadratic programs that can be efficiently solved using GUROBI;  see \eqref{EFL:Follower_primal} and \eqref{EFL:Follower_dual} in Appendix~\ref{sec:EFL-Appendix_FollowerPrimalDual}.

We invoke \RH at every node of the branch-and-bound tree until the point where two feasible solutions have been generated. 
After that point, \RH is run at a node only if there is a successful trial of a Bernoulli distribution with success probability 5\%. 

The second half of Table~\ref{tab:EFL-B&BSummary-Results2} summarizes the results for {the Duality-based formulation} when \RH is utilized. 
In this table, \textbf{\#\RH Execs} is the number of times \RH is called during the branch-and-bound search and \textbf{Time/\RH Exec} is the average time required per execution of \RH. 
We make the following observations:
\begin{enumerate}
    \item 
    Using \RH, {the Duality-based formulation} solves all instances within an optimality gap of 1\%, but most of them reach the time limit of 1200 seconds.
    \item 
    As the number of arcs increases, the size of the follower primal-dual problems becomes larger, which results in an increase in the time spent per \RH execution and in a decrease of the number of \RH executions during the allotted time.
    \item 
    Fewer nodes are explored when using \RH as compared to when \RH is not used. 
    Moreover, as the number of arcs in a network increases for given cost parameters, the optimal value decreases. 
    The reason is the increase in number of constraints \eqref{constr:EFL-EqbmPrice} on the equilibrium price vector $\bdvapi$.
\end{enumerate}

\begin{sidewaystable}[!htbp]
  \centering
  \resizebox{1\textwidth}{!}{%
    \begin{tabular}{c|rrrrr|rrrrr|rr}
     & \multicolumn{5}{c|}{\textbf{{Duality-based} formulation}} 
     & \multicolumn{7}{c}{\textbf{{Duality-based} formulation} + \RH [$\RndThRH=0.5$]} \\      
     & \multicolumn{5}{c|}{} & \multicolumn{7}{c}{} \\ 
    \multirow{2}{*}{$|\arcs|$} & \multirow{2}{*}{\textbf{T}} & \multirow{2}{*}{\textbf{ObjVal}} & \multirow{2}{*}{\textbf{ObjBnd}} & \multirow{2}{*}{\%\textbf{Gap}} & \multirow{2}{*}{\textbf{\#N}} & \multirow{2}{*}{\textbf{T}} & \multirow{2}{*}{\textbf{ObjVal}} & \multirow{2}{*}{\textbf{ObjBnd}} & \multirow{2}{*}{\%\textbf{Gap}} & \multirow{2}{*}{\textbf{\#N}} & {\textbf{\# RH}} & {\textbf{Time/}}\\
        &       &       &       &       &   \textbf{}   &       &       &       &       &   \textbf{} & {\textbf{Execs}} & {\textbf{RH Exec}}  \\ \hline
          &       &       &       &       &       &       &       &       &       &       &       &  \\
    100   & 0.15  &           2,119,361.60  &           2,119,361.60  & 0.000 &                          1  & 0.16  &     2,119,361.60  &           2,119,361.60  & 0.000 &                   1  & 0     &  \\
    570   & 10.76 &              650,467.42  &              650,507.78  & 0.006 &                 6,115  & 80.30 &         650,466.70  &              650,508.41  & 0.006 &           6,564  & 217   & 0.25 \\
    1040  & 724.72 &              506,984.82  &              507,032.81  & 0.009 &             265,152  & 1208.30 &         506,979.30  &              507,307.56  & 0.065 &         63,133  & 2102  & 0.47 \\
    1510  & 347.67 &              444,830.62  &              444,867.43  & 0.008 &               26,023  & 911.58 &         444,836.95  &              444,874.14  & 0.008 &         23,690  & 732   & 0.79 \\
    1980  & 392.08 &              410,665.20  &              410,693.85  & 0.007 &               52,259  & 1219.80 &         410,665.42  &              410,855.78  & 0.046 &         23,913  & 762   & 1.15 \\
    2450  & 502.07 &              389,643.87  &              389,676.47  & 0.008 &               34,947  & 1229.78 &         389,643.87  &              389,845.14  & 0.052 &         17,861  & 556   & 1.59 \\
    2920  & 1200.39 &              363,512.99  &              363,738.14  & 0.062 &               73,062  & 1203.87 &         363,476.39  &              363,932.78  & 0.126 &         10,865  & 375   & 2.05 \\
    3390  & 1200.07 &              354,221.20  &              354,621.21  & 0.113 &               21,565  & 1215.40 &         354,251.00  &              354,617.54  & 0.103 &           8,750  & 306   & 2.45 \\
    3860  & 1200.06 &  \multicolumn{1}{c}{--}  &              347,199.30  & \multicolumn{1}{c}{--} &               10,552  & 1211.74 &         346,565.59  &              346,876.96  & 0.090 &           6,687  & 232   & 3.01 \\
    4330  & 1200.06 &  \multicolumn{1}{c}{--}  &              330,324.71  & \multicolumn{1}{c}{--} &               10,678  & 1231.43 &         329,445.52  &              330,074.57  & 0.191 &           5,594  & 201   & 3.61 \\
    4800  & 1200.07 &  \multicolumn{1}{c}{--}  &              322,977.84  & \multicolumn{1}{c}{--} &                 8,662  & 1200.08 &         322,009.42  &              322,916.25  & 0.282 &           3,110  & 123   & 4.06 \\
    5270  & 1200.03 &  \multicolumn{1}{c}{--}  &              314,437.13  & \multicolumn{1}{c}{--} &               10,155  & 1200.75 &         313,480.74  &              314,255.57  & 0.247 &           4,315  & 163   & 4.88 \\
    5740  & 1200.09 &  \multicolumn{1}{c}{--}  &              311,383.15  & \multicolumn{1}{c}{--} &                 8,725  & 1206.90 &         310,658.12  &              311,220.86  & 0.181 &           3,093  & 116   & 5.64 \\
    6210  & 1201.48 &  \multicolumn{1}{c}{--}  &              307,847.91  & \multicolumn{1}{c}{--} &                 5,127  & 1204.45 &         306,844.53  &              307,817.80  & 0.317 &           1,771  & 79    & 6.30 \\
    6680  & 1200.04 &  \multicolumn{1}{c}{--}  &              303,935.43  & \multicolumn{1}{c}{--} &                 5,180  & 1200.08 &         303,152.01  &              303,891.56  & 0.244 &           2,239  & 87    & 6.95 \\
    7150  & 1200.16 &  \multicolumn{1}{c}{--}  &              301,774.24  & \multicolumn{1}{c}{--} &                 5,359  & 1226.24 &         299,771.08  &              301,833.72  & 0.688 &           2,588  & 109   & 6.39 \\
    7620  & 1200.07 &  \multicolumn{1}{c}{--}  &              299,951.35  & \multicolumn{1}{c}{--} &                 5,257  & 1206.85 &         299,034.18  &              299,999.50  & 0.323 &           2,504  & 102   & 7.41 \\
    8090  & 1200.18 &  \multicolumn{1}{c}{--}  &              298,796.14  & \multicolumn{1}{c}{--} &                 5,210  & 1200.03 &         297,932.89  &              298,772.81  & 0.282 &           2,083  & 87    & 8.51 \\
    8560  & 1200.08 &  \multicolumn{1}{c}{--}  &              298,067.73  & \multicolumn{1}{c}{--} &                 5,199  & 1204.08 &         296,417.40  &              298,123.74  & 0.576 &           1,549  & 76    & 9.36 \\
    9030  & 1200.05 &  \multicolumn{1}{c}{--}  &              296,944.87  & \multicolumn{1}{c}{--} &                 4,677  & 1200.04 &         296,290.47  &              296,957.08  & 0.225 &           1,845  & 77    & 10.38 \\ 
    \end{tabular}%
    }
  \caption{\scriptsize \textbf{{Duality-based} formulation}  with and without Rounding Heuristic (\textbf{RH}) on large-sized EFL instances with $|\nodes|=100$.}
  \label{tab:EFL-B&BSummary-Results2}%
\end{sidewaystable}%

\subsection{Application 2: {RGUP in power distribution networks}} 
\label{sec:RGU-Descrp}

Due to rising electricity demand, power grids are often burdened with very large loads, which may result in power outages in the worst case. 
A possible solution is to integrate renewable generation units (RGUs) into power distribution networks to improve reliability. 
As a result, the optimal deployment of RGUs in distribution networks has attracted recent attention from the research community; see \cite{zhang2017optimal}. 
In a power distribution network, there are several firms, each controlling a number of generating units. 
Each generation unit submits a bid to the independent system operator (ISO). 
This bid defines the supply-price curve at each of the supply nodes. 
ISO then decides how much power to buy from the different units, how much to deliver to consumers, and what prices to charge based on the solution of an Optimal Power Flow (OPF) problem. 

Consider a power distribution system represented as a directed network $G=(\nodes,\arcs)$ \big(where $\nodes$ is the set of nodes/buses and $\arcs$ is the set of arcs/lines\big) with demand nodes $\nodesD\subseteq \nodes$ and supply nodes $\nodesS\subseteq \nodes$. 
Define $\nodeso$ to be the set of nodes/buses under control of the leader firm where RGUs with capacity $\bvaq_i$ for $i\in \nodeso$ can be located. {The upper-level feasible set is $\feas_\textrm{RGUP} = 
\left\{
 (\bvaz,\bvaq) \in \Re^{2|\nodeso|} \ \big|\  
 0 \leq \bvaq_i \leq \ubvaq_i  \bvaz_i \; \forall i\in \nodeso, \;
 \bvaz \in \{0,1\}^{|\nodeso|}
 \right\}
$
where $\bvaz_i$ is the RGU location decision and $\ubvaq_i$ is the maximum possible RGU capacity at node $i\in\nodeso$.}
We assume that there is one generation unit per node which means $\nodesS\cap\nodeso=\emptyset$ and that ISO accepts all the RGU generation so that there is no bidding for nodes in $\nodeso$. 
The {lower level} is the OPF problem faced by the ISO, which can be understood as a single commodity SPE problem (with additional constraints due to Kirchoff's voltage law) \cite{hobbs2000strategic} where the supply curves are the bid submitted by generation units at nodes $\nodesS$. 
To simplify the derivations, we consider a DC OPF model where resistance is assumed negligible relative to reactance and is ignored. 
Denote 
$\bvaflo=(\bvaflo_{ij},\ (i,j)\in\arcs)$, 
$\bvadem=(\bvadem_i,\ i \in \nodesD)$, 
$\bvasup=(\bvasup_j,\ j \in \nodesS)$ to be the vectors of power flows, demands, and supplies in the network, respectively. 
Assuming competition within the network, the installation of RGU capacity $\bvaq$ increases total power generation capacity and impacts equilibrium prices and power flows in the distribution network. 
We also consider uncertainty in RGU generation \cite{zhang2017optimal} using $\samp=(\samp_i,\ i \in \nodeso)$ where each $0\leq\samp_i\leq 1$. Here, $\samp_i$ is the fraction of capacity $\bvaq_i$ that is realized into actual RGU generation.
For a given $\samp$, the resulting power flows, demand, and supply will produce a new competitive equilibrium {$(\bvadem^*,\bvasup^*)$}, which is obtained by solving the variational inequality
\begin{align*}
    - \langle\mybbeta(\bvadem^*),\bvadem-\bvadem^*\rangle + \langle \mybgamma(\bvasup^*), \bvasup-\bvasup^*\rangle \geq 0, \quad \forall (\bvadem, \bvasup) \in \proj_{\bvadem,\bvasup} \myOmega(\bvaq,\samp),
\end{align*}
where $\mybbeta(\bvadem)$ and $\mybgamma(\bvasup)$ are the inverse demand cost vector and supply bid functions, respectively. 
The set $\myOmega(\bvaq,\samp)$ corresponds to the power flow balance, Kirchoff's voltage law \cite{hobbs2000strategic} together with line and generation capacity constraints given as
\begin{eqnarray*}
    \Omega(\bvaq,\samp) = \left\{
    \begin{array}{c|c}
    (\bvaflo,\bvadem,\bvasup)\; & 
    \begin{array}{ll}
    & \idyof{} \bvaflo + \idyoD{} \bvadem - \Diag(\samp) \bvaq = 0,\; \\
    & \idyif{} \bvaflo + \idyiD{} \bvadem - \idyiS{} \bvasup = 0, \\
    & \reac \bvaflo = 0, \\
    & 0\leq \bvaflo \leq \ubvaflo,\; 0 \leq \bvasup \leq \ubvasup,\; \bvadem \geq 0 
    \end{array}
    \end{array}
    \right\},
\end{eqnarray*}
where $\idyif{}$, $\idyiD{}$, and $\idyiS{}$ are the node-arc incidence, demand node incidence, supply node incidence matrices corresponding to the set of nodes $\nodesi=\nodes\setminus\nodeso$, respectively, and matrices $(\idyof{})_{\nodeso\times \arcs}$ and $(\idyoD{})_{\nodeso\times \nodesD}$ are defined similarly for the set $\nodeso$. 
{The definitions of these incidence matrices are given in Section~\ref{sec:EFL-Descrp}}. 
Further, $\ubvaflo$ is the vector of line capacities, $\ubvasup$ is the vector of generator capacities, and $\reac$ is the incidence matrix of signed reactance coefficients \cite{hobbs2000strategic}, \ie
     $\reac_{m,ij} = s_{ijm} r_{ij}$  if $(i,j)\in L_m$ and $0$ otherwise,   
where $m$ indexes Kirchoff voltage 
loops,
$L_m$ is the ordered set of arcs in loop $m$, $s_{ijm}=\pm1$ depending on the orientation of arc $(i,j)$ in loop $m$, and $r_{ij}$ is the reactance of line $(i,j)$. 
{
The single-level reformulation of RGUP is a two-stage stochastic program given in Appendix~\ref{sec:RGU-Appendix_SingleLevel} that maximizes the leader firm's expected profit. 
We draw finite samples for the uncertainty $\{\samp^{\langle {k}\rangle}\}_{{k}=1}^{\nsample}$, and use sample average approximation for the expectation.
The KKT-based and the Duality-based formulations of RGUP can be obtained from \eqref{eq:SL-MPCC} and \eqref{eq:SL-LagrDual} using the application specific constraints described above.
Their explicit forms can be found in Appendix~\ref{sec:RGU-Appendix_SingleLevel}.
}

\subsubsection{Test instances}
\label{sec:RGU-InstGen}

We use the standard IEEE bus systems summarized in Table~\ref{tab:IEEE-Dataset} as the power distribution networks in our numerical study. 
As is common in the power generation literature, we combine all lines between each pair of nodes in the data set into an equivalent single line. 
Each line is then transformed into a pair of opposite arcs in order to obtain a directed network which allows power flow in either direction between a pair of nodes.

The set of potential RGU locations $\nodeso$ is randomly selected from set $\nodes\setminus\nodesS$ where $|\nodeso|$ is given in Table~\ref{tab:IEEE-Dataset}. 
The first-stage cost parameters of the upper-level problem are randomly generated as $\conewz_i\sim\Unif(150,200)$ and $\coq_i \sim \Unif(3,5)$ for each $i\in\nodeso$. 
The lower-level cost vectors are chosen to be affine functions of the form:
{$[\mybbeta(\bvadem)]_{i} = -\mybeta{i}{1}\cdot \bvadem_i + \mybeta{i}{0}\;\; \forall i\in\nodesD$}, {$[\mybgamma(\bvasup)]_{j} = \mygamma{j}{1}\cdot \bvasup_j + \mygamma{j}{0}\;\; \forall j\in\nodesS$},
where $\mybeta{i}{1}>0$ and $\mygamma{j}{1}>0$ so that Assumptions \ref{assump:JacobSymm}-\ref{assump:Unique} are satisfied.
Since $\brhsB=\bm{0}$ in this application, we have from Lemma~\ref{lemma:BndRelaxII} that the root relaxation of {the Duality-based formulation} is bounded.
For a load bus $i$, the intercept parameter $\mybeta{i}{0}$ is set to $40$ and the slope $\mybeta{i}{1}$ is determined so that the resulting cost is $30$ at the rated load in megawatt (MW). 
More specifically, we set
$
\mybeta{i}{1} = \frac{40-30}{\text{Load MW rating at bus $i$}}.
$
For a generator bus $j$, the intercept and slope parameters $\mygamma{j}{0}$ and $\mygamma{j}{1}$ are fixed so that $10<\mygamma{j}{0}<33$ and $0.03<\mygamma{j}{1}<0.70$. 
The uncertainty samples are drawn 
from a uniform distribution $\samp_i\sim \Unif(0,1)$ for $i\in\nodeso$ where the sample size $\nsample \in\{10,25,50,100\}$. 
All models are solved using GUROBI (v9.5.2) in Python 3.7 with {the} 
parameters and machine settings described in Section~\ref{sec:EFL-InstGen}. 
\begin{table}[h]
    \centering
     \vspace*{-0.1cm}
    {
    \resizebox{0.6\textwidth}{!}{%
    \begin{tabular}{c|c|c|c|c}     
     & \multicolumn{3}{c|}{\textbf{IEEE Power Flow Test Cases}} &  \\
     & \multicolumn{3}{c|}{(\url{https://labs.ece.uw.edu/pstca/})} & \\
     \hline
    \textbf{Dataset} &  \multirow{2}{*}{\textbf{\# Lines}}  & \textbf{\# Load} & \textbf{\# Generator} & \multirow{2}{*}{$|\nodeso|$}  \\ 
    \textbf{$|\nodes|$}  &   &  \textbf{Buses}, $|\nodesD|$ & \textbf{Buses}, $|\nodesS|$ & \\ \hline 
      14 Bus    & 20  & 11 & 2 & 5 \\
     30 Bus   & 41  & 21 & 2 & 10\\
     57 Bus   & 80   & 42  & 4 & 20 \\
     118 Bus  & 186  & 91 & 19 & 40 \\
     300 Bus   & 411 & 188   & 56 & 80 \\ \hline
    \end{tabular}%
    }
    \caption{\scriptsize Standard IEEE test networks.}    
    \label{tab:IEEE-Dataset}
    }
\end{table}

\subsubsection{Computational results}
\label{sec:RGU-ComputResults1}

First we compare the {KKT and Duality-based formulations} in Table~\ref{tab:RGU-B&BSummary-Results1} on a 3 bus network with 3 lines, 2 generators, and 2 load buses. 
We fix the upper-level cost parameters to 0 and vary the sample size $\nsample$ of uncertainty from 1 to 5. 
In this case, we again conjecture that the result of Lemma \ref{lemma:Unbd_ReformI} holds for the {RGUP constraints presented in Appendix~\ref{sec:RGU-Appendix_SingleLevel}}
{as we observe that} the root relaxation of {the KKT-based formulation} is unbounded 
and limits the solution to {at most} 4 samples in a time limit of 600 seconds; see Table~\ref{tab:RGU-B&BSummary-Results1}. 
Further for $\nsample\in\{1,2,3,4\}$, {the Duality-based formulation} is much faster and solves all instances at the root node whereas {the KKT-based formulation} explores a number of nodes that is several orders of magnitude larger. 
Therefore, for the remainder of this section we focus on {the Duality-based formulation}.

\begin{table}[!htbp]
   \centering
  \resizebox{1\textwidth}{!}{%
    \begin{tabular}{cccc|rrrcr|rrrcc}
         &       &       &       & \multicolumn{5}{c|}{\multirow{2}{*}{\textbf{{KKT-based} formulation}}}
         & \multicolumn{5}{c}{\multirow{2}{*}{\textbf{{Duality-based} formulation}}} \\ 
         &       &       &       & \multicolumn{5}{c|}{}     & \multicolumn{5}{c}{} \\ 
    \cmidrule{5-14} 
    \multirow{2}{*}{$|\nodes|$} & \multirow{2}{*}{$\arcs$} & \textbf{\# RGUs}, & \multirow{2}{*}{$\nsample$} & \multirow{2}{*}{\textbf{T}} & \multirow{2}{*}{\textbf{ObjVal}} & \multirow{2}{*}{\textbf{ObjBnd}} & \multirow{2}{*}{\%\textbf{Gap}} & \multirow{2}{*}{\textbf{\#N}} & \multirow{2}{*}{\textbf{T}} & \multirow{2}{*}{\textbf{ObjVal}} & \multirow{2}{*}{\textbf{ObjBnd}} & \multirow{2}{*}{\%\textbf{Gap}} & \multirow{2}{*}{\textbf{\#N}}\\
        &       &  $|\nodeso|$   &   &       &       &       &  &   &       &       &       &       &  \\ 
    \hline
          &       &       &       &       &       &       &       &       &       &       &       &       &  \\
    \multirow{5}[1]{*}{3} & \multirow{5}[1]{*}{6} & \multirow{5}[1]{*}{1} & 1     & 0.08  & 272.39 & 272.39 & 0     & 1.3E+02 & 0.01  & 272.39 & 272.39 & 0     & 1 \\
          &       &       & 2     & 0.50  & 206.75 & 206.75 & 0     & 5.6E+03 & 0.01  & 206.75 & 206.75 & 0     & 1 \\
          &       &       & 3     & 11.01 & 316.89 & 316.89 & 0     & 7.9E+04 & 0.02  & 316.89 & 316.89 & 0     & 1 \\
          &       &       & 4     & 363.72 & 436.66 & 436.66 & 0     & 1.8E+06 & 0.02  & 436.66 & 436.66 & 0     & 1 \\
          &       &       & 5     & 600.01 & 480.63 & \multicolumn{1}{c}{--} & \multicolumn{1}{c}{--} & 2.1E+06 & 0.03  & 480.63 & 480.63 & 0     & 1 \\
    \hline    
    \end{tabular}%
    }
    \captionsetup{justification=centering}
   \caption{\scriptsize \textbf{{KKT-based vs Duality-based formulations}} on 3 bus network.}
  \label{tab:RGU-B&BSummary-Results1}
\end{table}%

Second, we study the performance of {the Duality-based formulation} on standard IEEE instances and cost parameters described in Section~\ref{sec:RGU-InstGen}. 
Table~\ref{tab:RGU-B&BSummary-Results2}  summarizes the results where the last column \textbf{Avg Time RootRelax} is the average time spent in solving the root relaxation. 
For 300 bus networks and $\nsample\in\{50,100\}$, the time limit is set to 1200 seconds. 
For the remaining combinations, the time limit is 600 seconds. 
We make the following observations from the first half of Table~\ref{tab:RGU-B&BSummary-Results2}:
\begin{enumerate}
    \item 
    For 14 and 30 bus networks with sample size $\nsample\in\{10,25,50\}$, the solver is able to successfully find lower and upper bounds on the optimal value. 
    The instances, however, reach the time limit of 600 seconds and terminate with a gap larger than tolerance of 0.01\%.
    \item 
    The 57 bus network with $\nsample\in\{10,25\}$ and 118 bus network with $\nsample=10$ can be handled. 
    For larger sample sizes, however, the solver cannot find a lower bound (\ie find a feasible solution) within 600 seconds. 
    In the case of the 300 bus network, no lower bound is found for any value of $\nsample$.
    \item 
    The last column shows that the time spent in solving the root relaxation grows roughly fourfold for each twofold increase in sample size $\nsample$. 
    As the root relaxation becomes more computationally expensive with increasing $\nsample$, fewer {branch-and-bound} nodes are explored within the given time limit.
\end{enumerate}
The above observations suggest that finding a feasible solution as early as possible in the branch-and-bound tree should help solving the larger sized instances. 
Hence, we use a rounding heuristic \RH similar to that described in Section~\ref{sec:EFL-ComputResults2}; see Appendix~\ref{sec:RGU-Appendix_RH} for details. 
We select $\RndThRH=0.5$. 
Further, we invoke \RH at every node of the branch-and-bound tree until the point where one feasible solution has been generated. 
After this point, we stop running \RH. 
The results obtained after using \RH are given in Table~\ref{tab:RGU-B&BSummary-Results2} where \textbf{Time/\RH Exec} is the time spent per \RH execution. 
We make significant progress within the time limit on instances previously unsolved (except for the case with a 300 bus and with $\nsample=100$) by exploring fewer nodes for most cases. 
For instances that were solved before, \RH typically improves either the runtime (\textit{e.g.}, 57 bus with $\nsample=10$) or the optimality gap at termination (\textit{e.g.}, 14 bus with $\nsample=50$). 
We also observe that, for any given bus network, the gaps tend to increase with $\nsample$.

\begin{sidewaystable}[htbp]
  \centering
  \resizebox{1\textwidth}{!}{%
    \begin{tabular}{cccc|rrrrr|rrrrrr|c}
    &     &       &       &  \multicolumn{5}{c|}{\textbf{{Duality-based} formulation}} 
    & \multicolumn{6}{c|}{\textbf{{Duality-based} formulation } + \RH [$\RndThRH=0.5$]} 
    & \multicolumn{1}{c}{}\\   
   &       &       &      & \multicolumn{5}{c|}{}   & \multicolumn{6}{c|}{} & \multicolumn{1}{c}{}\\
   \multirow{2}{*}{$|\nodes|$} & \multirow{2}{*}{$|\arcs|$} & \textbf{\# RGUs}, & \multirow{2}{*}{$\nsample$} & \multirow{2}{*}{\textbf{T}} & \multirow{2}{*}{\textbf{ObjVal}} & \multirow{2}{*}{\textbf{ObjBnd}} & \multirow{2}{*}{\%\textbf{Gap}} & \multirow{2}{*}{\textbf{\#N}} & \multirow{2}{*}{\textbf{T}} & \multirow{2}{*}{\textbf{ObjVal}} & \multirow{2}{*}{\textbf{ObjBnd}} & \multirow{2}{*}{\%\textbf{Gap}} & \multirow{2}{*}{\textbf{\#N}} & \textbf{Time/} & \textbf{Avg Time} \\
        &       &  $|\nodeso|$   &   &       &       &       &  &   &       &       &       &       &  & \textbf{\RH Exec} & \textbf{RootRelax} \\ 
    \hline  &       &       &       &       &       &       &       &       &       &       &       &       &   &  & \\
    \multirow{4}[1]{*}{14} & \multirow{4}[1]{*}{40} & \multirow{4}[1]{*}{5} & \multicolumn{1}{c|}{10} & 600.02 &                2,401.74  &                2,424.37  & 0.942 &         886,768  & 600.04 &                2,401.74  &                2,430.48  & 1.197 &         643,161  &                        0.09  &                    0.04  \\
          &       &       & \multicolumn{1}{c|}{25} & 600.11 &                1,882.31  &                1,926.51  & 2.348 &         170,584  & 600.08 &                1,882.31  &                1,924.05  & 2.217 &         176,502  &                        0.19  &                    0.18  \\
          &       &       & \multicolumn{1}{c|}{50} & 600.13 &                1,888.39  &                1,929.74  & 2.190 &            75,227  & 600.13 &                1,888.39  &                1,929.73  & 2.189 &            71,228  &                        0.37  &                    0.57  \\
          &       &       & \multicolumn{1}{c|}{100} & 600.09 & \multicolumn{1}{c}{--}  &                1,986.56  &\multicolumn{1}{c}{--} &            32,395  & 600.24 &                1,911.18  &                1,952.88  & 2.182 &            25,211  &                        1.99  &                    4.83  \\
    \hline
          &       &       & \multicolumn{1}{c|}{} &       &       &       &       &       &       &       &       &       &       &       &  \\
    \multirow{4}[1]{*}{30} & \multirow{4}[1]{*}{82} & \multirow{4}[1]{*}{10} & \multicolumn{1}{c|}{10} & 600.29 &                2,584.47  &                2,621.01  & 1.414 &            52,667  & 600.17 &                2,584.47  &                2,621.09  & 1.417 &            54,055  &                        0.34  &                    0.51  \\
          &       &       & \multicolumn{1}{c|}{25} & 600.10 &                2,563.17  &                2,644.82  & 3.186 &            54,416  & 600.17 &                2,565.22  &                2,647.80  & 3.219 &            55,542  &                        0.67  &                    1.70  \\
          &       &       & \multicolumn{1}{c|}{50} & 600.12 &                2,485.95  &                2,654.72  & 6.789 &            16,216  & 600.19 &                2,574.43  &                2,658.70  & 3.273 &            22,621  &                        1.78  &                    7.00  \\
          &       &       & \multicolumn{1}{c|}{100} & 600.06 & \multicolumn{1}{c}{--}  &                2,216.28  &\multicolumn{1}{c}{--} &              5,236  & 600.12 &                2,109.87  &                2,216.07  & 5.033 &              3,469  &                        3.50  &                  36.54  \\
    \hline
          &       &       & \multicolumn{1}{c|}{} &       &       &       &       &       &       &       &       &       &       &       &  \\
    \multirow{4}[1]{*}{57} & \multirow{4}[1]{*}{156} & \multirow{4}[1]{*}{20} & \multicolumn{1}{c|}{10} & 268.95 &              11,161.15  &              11,162.27  & 0.010 &            28,300  & 51.81 &              11,161.15  &              11,161.15  & 0.000 &              1,771  &                        0.94  &                    1.94  \\
          &       &       & \multicolumn{1}{c|}{25} & 600.10 &                9,762.86  &              10,008.80  & 2.519 &              7,252  & 600.12 &                9,762.86  &              10,033.71  & 2.774 &              6,541  &                        2.21  &                  10.28  \\
          &       &       & \multicolumn{1}{c|}{50} & 600.13 & \multicolumn{1}{c}{--}  &              10,991.11  &\multicolumn{1}{c}{--} &              3,449  & 601.79 &              10,457.77  &              10,971.45  & 4.912 &              1,056  &                        4.12  &                  42.44  \\
          &       &       & \multicolumn{1}{c|}{100} & 603.21 & \multicolumn{1}{c}{--}  &              10,877.82  &\multicolumn{1}{c}{--} &                      1  & 600.16 &              10,245.95  &              10,891.10  & 6.297 &                      1  &                        7.14  &                204.73  \\
    \hline
          &       &       & \multicolumn{1}{c|}{} &       &       &       &       &       &       &       &       &       &       &       &  \\
    \multirow{4}[1]{*}{118} & \multirow{4}[1]{*}{358} & \multirow{4}[1]{*}{40} & \multicolumn{1}{c|}{10} & 29.69 &              22,798.57  &              22,799.46  & 0.004 &              1,232  & 34.80 &              22,798.57  &              22,800.57  & 0.009 &              1,046  &                        4.43  &                    6.38  \\
          &       &       & \multicolumn{1}{c|}{25} & 600.17 & \multicolumn{1}{c}{--}  &              23,448.94  &\multicolumn{1}{c}{--} &              3,543  & 600.35 &              23,391.43  &              23,444.93  & 0.229 &              7,731  &                        9.33  &                  20.91  \\
          &       &       & \multicolumn{1}{c|}{50} & 600.18 & \multicolumn{1}{c}{--}  &              24,080.32  &\multicolumn{1}{c}{--} &              3,160  & 600.20 &              24,013.95  &              24,079.44  & 0.273 &              2,675  &                      15.36  &                  87.26  \\
          &       &       & \multicolumn{1}{c|}{100} & 600.47 & \multicolumn{1}{c}{--}  &              24,038.75  &\multicolumn{1}{c}{--} &                  118  & 608.96 &              23,963.98  &              24,041.51  & 0.324 &                      1  &                      34.36  &                330.66  \\
    \hline
          &       &       & \multicolumn{1}{c|}{} &       &       &       &       &       &       &       &       &       &       &       &  \\
    \multirow{4}[0]{*}{300} & \multirow{4}[0]{*}{818} & \multirow{4}[0]{*}{80} & \multicolumn{1}{c|}{10} & 600.17 & \multicolumn{1}{c}{--}  &              52,263.83  &\multicolumn{1}{c}{--} &              7,438  & 600.22 &              51,404.37  &              52,181.32  & 1.511 &              4,292  &                        7.96  &                  29.43  \\
          &       &       & \multicolumn{1}{c|}{25} & 600.16 & \multicolumn{1}{c}{--}  &              53,047.86  &\multicolumn{1}{c}{--} &              1,368  & 600.21 &              52,074.98  &              53,020.28  & 1.815 &              1,095  &                      16.04  &                147.79  \\
          &       &       & 50    & 1200.10 & \multicolumn{1}{c}{--}  &              53,953.58  &\multicolumn{1}{c}{--} &                      1  & 1200.21 &              53,123.37  &              53,908.31  & 1.478 &                  129  &                      24.36  &                764.16  \\
          &       &       & 100   & 1200.61 & \multicolumn{1}{c}{--}  &           682,623.80  &\multicolumn{1}{c}{--} &                      1  & 1200.12 & \multicolumn{1}{c}{--}  &           682,623.80  &\multicolumn{1}{c}{--} &                      1  &  \multicolumn{1}{r|}{--}    &  TIME\_LIMIT \\
    \end{tabular}%
    }
    \caption{\scriptsize \textbf{{Duality-based} formulation} 
    with vs without Rounding Heuristic (\textbf{RH}) on IEEE test networks in Table~\ref{tab:IEEE-Dataset}.}
    \label{tab:RGU-B&BSummary-Results2}
\end{sidewaystable}%

\section{Conclusion}

We introduce a new Duality-based formulation for bilevel programs with spatial price equilibrium constraints. 
Together with the use of specially designed heuristics, this new bounded formulation allows the global solution of instances of EFL and RGUP orders of magnitude larger than is possible with the classical KKT-based formulation.  

\section*{Acknowledgement}
{This work was supported in part by AFOSR award FA9550-23-1-0451. 
The authors would like to thank the review team for their valuable feedback, which played a significant role in enhancing the quality of presentation of this paper.}

\section*{Data availability statement}
\url{https://t.ly/PC3D6} contains all the data used in numerical experiments (Section \ref{sec:NumericExp}).
\begin{itemize}
    \item \url{https://t.ly/kHwCP} for medium-sized networks in Section~\ref{sec:EFL-ComputResults1}, and \\
    \url{https://t.ly/QVBSX} for large-sized networks in Section~\ref{sec:EFL-ComputResults2}. 
    \begin{itemize}
        \item Links have folder for each (\#Nodes, \#Arcs) pair;
        \item Each folder contains data of different instances of a (\#Nodes, \#Arcs) pair;
        \item For each instance, separate \texttt{.csv} files specify upper \& lower-level parameters.
    \end{itemize}
    \item \url{https://t.ly/iP6iW} for IEEE bus networks in Section~\ref{sec:RGU-ComputResults1}.
    \newline The link has folders for each IEEE bus in Table \ref{tab:IEEE-Dataset}. Each folder contains
     \begin{itemize}
        \item A separate \texttt{.csv} file that specifies the upper-level parameters.
        \item A subfolder that has \texttt{.csv} files for lower-level parameters. 
        \item Separate \texttt{.csv} files for data of different number of uncertain samples in renewable generation.    
    \end{itemize}
\end{itemize}

\bibliography{sn-bibliography}

\clearpage

\begin{appendices}

\section{EFL on networks} 

\subsection{Single-level reformulations}
\label{sec:EFL-Appendix_SingleLevel}

Define $\idyf{} = \begin{pmatrix} \idyof{} \\ \idyif{} \end{pmatrix}$, 
$\idyD{} = \begin{pmatrix} \idyoD{} \\ \idyiD{} \end{pmatrix}$, 
$\idyS{} = \begin{pmatrix} \idyoS{} \\ \idyiS{} \end{pmatrix}$. 
To cast this formulation in the format of Section~\ref{sec:problem_setup}, one would choose 
$\bvay:=\begin{pmatrix} \bvaflo, & \bvadem,& \bvasup\end{pmatrix}$, 
$\ubvay:=(\bm{\infty},\bm{\infty},\bm{\infty})$, 
$\lhsy:=\begin{pmatrix} \idyf{} & \idyD{} & -\idyS{} \end{pmatrix}$, 
$\brhsB:=\begin{pmatrix} {0} \\ {0}\end{pmatrix}$
without using $\bvaw$ and $\lhsw$. 
The constraints of the single-level reformulations of EFL are:
\begin{subequations}
\begin{align}
     \label{constr:EFL-MPCC-Leader1} & {(\bvaz, \bvaq) \in \feas_\textrm{EFL}}, \\
     \label{constr:EFL-MPCC-Leader2} & {\bdvapi \geq 0}, \\
    & \label{constr:EFL-MPCC-PF0} \idyf{} \bvaflo + \idyD{} \bvadem - \idyS{} \bvasup - \begin{pmatrix}\bvaq \\ 0 \end{pmatrix} = 0, \\ 
   \label{constr:EFL-MPCC-PF} & \bvaflo \geq 0,\; \bvadem \geq 0,\; \bvasup \geq 0,\quad   \\
     & \bdvamuflo\geq0,\;\; \bdvamudem\geq0,\;\; \bdvamusup\geq0, \quad  \label{constr:EFL-MPCC-DF}\\     
     & \mybalpha(\bvaflo) + \idyf{\ltr} \bdvapi - \bdvamuflo = 0, \label{constr:EFL-MPCC-Stnry1} \\ 
     & - \mybbeta(\bvadem) + \idyD{\ltr} \bdvapi - \bdvamudem = 0, \\ 
     & \mybgamma(\bvasup) - \idyS{\ltr} \bdvapi - \bdvamusup = 0,       
     \label{constr:EFL-EqbmPrice}  \\
     & \bvaflo\tr \bdvamuflo  = 0,\;\; \bvadem\tr \bdvamudem= 0,\;\; \bvasup\tr \bdvamusup= 0 \label{constr:EFL-MPCC-CC}
\end{align}
\label{constr:EFL-MPCC}
\end{subequations}
where the set $\feas_\textrm{EFL}$ is specified in Section~\ref{sec:EFL-Descrp}.
The KKT-based reformulation of EFL is
\begin{align}
& \begin{aligned}
\optval{\kkt}{} = 
    \max\;\; & ({\bdvapio}-\coq)\tr \bvaq - \conewz\tr  \bvaz \\
    \st\;\; & \eqref{constr:EFL-MPCC-Leader1}-\eqref{constr:EFL-MPCC-CC}. 
\end{aligned} \label{eq:EFL-SL-MPCC} \\ 
\intertext{The Duality-based reformulation obtained using Theorem~\ref{theorem:SL-Reform2} is}
& \begin{aligned}
\optval{\dual}{} =
    \max\;\; & - \left\langle \mybalpha(\bvaflo),\ \bvaflo \right\rangle + \left\langle \mybbeta(\bvadem),\ \bvadem \right\rangle - \left\langle \mybgamma(\bvasup),\ \bvasup \right\rangle -\coq\tr  \bvaq - \conewz\tr  \bvaz\\
    \st \;\; & \eqref{constr:EFL-MPCC-Leader1}-\eqref{constr:EFL-MPCC-CC}. 
\end{aligned}   \label{eq:EFL-SL-LagrDual}
\end{align}

\subsection{Rounding heuristic procedure}
\label{sec:EFL-Appendix_RH}

The pseudo-code of this procedure is given in Algorithm~\ref{algo:roundingI}.

\algrenewcommand\algorithmicrequire{\textbf{Input:} }
\algrenewcommand\algorithmicensure{\textbf{Output:} }

\begin{algorithm}[!htbp]
\caption{Rounding Heuristic (\RH) for \eqref{eq:EFL-SL-LagrDual} } \label{algo:roundingI}
\begin{algorithmic}[1]
\Require{Relaxation solution $(\widehat \bvaz, \widehat \bvaq)$, rounding threshold $(\RndThRH \in [0,1])$}
\Ensure{Feasible solution $(\mymark \bvaz, \mymark \bvaq, \mymark \bvaflo, \mymark \bvadem, \mymark \bvasup, \mymark \bdvapi, \markbdvamuflo, \markbdvamudem,  \markbdvamusup)$ to \eqref{constr:EFL-MPCC} }
\vskip 1mm
\hrule
\vskip 1mm
\State Round $\widehat \bvaz$ into a binary vector  $\mymark \bvaz$ according to the rule
    \begin{align*}
        \mymark \bvaz_i = 
        \begin{cases}
        1 & \text{ if } \widehat \bvaz_i>\RndThRH \\
        0 & \text{ o.w.}
        \end{cases}
    \quad   \forall i\in\nodeso. 
    \end{align*}
\State
    Obtain a feasible vector $\mymark \bvaq$ that matches $\mymark \bvaz$ using the rule
    \begin{align*}
         \mymark \bvaq_i = 
        \begin{cases}
        \widehat \bvaq_i & \text{ if } \widetilde \bvaz_i=1 \\
        0 & \text{ o.w.}
        \end{cases}
     \quad   \forall i\in\nodeso.
    \end{align*}
\State
Fix $(\mymark \bvaz, \mymark \bvaq)$ and solve the lower-level primal and dual problems to obtain feasible  $(\mymark \bvaflo, \mymark \bvadem, \mymark \bvasup)$ and  $(\mymark \bdvapi, \markbdvamuflo, \markbdvamudem,  \markbdvamusup)$. 
\end{algorithmic} 
\end{algorithm}

\subsection{Lower-level primal and dual problems}
\label{sec:EFL-Appendix_FollowerPrimalDual}

We describe the lower-level primal and dual problems.
For the affine cost vector functions defined in Section~\ref{sec:EFL-InstGen}, 
\begin{alignat*}{5}
    & [\mybalpha(\bvaflo)]_{ij} = \myalpha{ij}{1}\cdot \bvaflo_{ij} + \myalpha{ij}{0}, &\quad& \forall (i,j)\in\arcs, \\
    & [\mybbeta(\bvadem)]_{i} = -\mybeta{i}{1}\cdot \bvadem_i + \mybeta{i}{0}, &\quad& \forall i\in\nodesD, \\
    & [\mybgamma(\bvasup)]_{j} = \mygamma{j}{1}\cdot \bvasup_j + \mygamma{j}{0}, &\quad& \forall j\in\nodesS,
\end{alignat*}
where $\myalpha{ij}{1}>0$, $\mybeta{i}{1}>0$, $\mygamma{j}{1}>0$, the follower's primal problem is the convex quadratic program:
\begin{subequations}\label{EFL:Follower_primal}
    \begin{align}
    \min_{\bvaflo,\bvadem,\bvasup}\;\; & 
    \sum_{(i,j)\in\arcs} \left(\frac{1}{2}\myalpha{ij}{1} \bvaflo_{ij}^2 + \myalpha{ij}{0} \bvaflo_{ij}\right) + 
    \sum_{i\in\nodesD} \left(\frac{1}{2}\mybeta{i}{1} \bvadem_i^2 - \mybeta{i}{0} \bvadem_i\right) + 
    \sum_{j\in\nodesS} \left(\frac{1}{2}\mygamma{j}{1} \bvasup_j^2 + \mygamma{j}{0} \bvasup_j\right)  \\
    \st\;\; & \idyf{} \bvaflo + \idyD{} \bvadem - \idyS{} \bvasup - \begin{pmatrix}\bvaq \\ 0 \end{pmatrix} = 0, \\
        & \bvaflo \geq 0,\; \bvadem \geq 0,\; \bvasup \geq 0.
\end{align}
\end{subequations}
Its dual problem is the concave quadratic program:

\begin{subequations}\label{EFL:Follower_dual}
    \begin{align}
    \max_{\bdvamuflo,\bdvamudem,\bdvamusup,\bdvapi}\;\; & -\frac{1}{2}\sum_{(i,j)\in\arcs} ({\bvaz}^{\bvaflo}_{ij})^2/\myalpha{ij}{1} -\frac{1}{2} \sum_{i\in\nodesD} \left({\bvaz}^{\bvadem}_i\right)^2/\mybeta{i}{1} -\frac{1}{2} \sum_{j\in\nodesS} \left({\bvaz}^{\bvasup}_j\right)^2/\mygamma{j}{1} - \bdvapiotr \bvaq  \\
    \st\;\; & \bvazflo=\mybalpha^0+\idyf{\ltr}\bdvapi-\bdvamuflo, \\
            & \bvazdem=-\mybbeta^0+\idyD{\ltr}\bdvapi-\bdvamudem, \\
            & \bvazsup=\mybgamma^0-\idyS{\tr}\bdvapi-\bdvamusup, \\
            & \bdvamuflo \geq 0,\; \bdvamudem \geq 0,\; \bdvamusup \geq 0.
\end{align}
\end{subequations}

In our computation, we solve a combined version of \eqref{EFL:Follower_primal} and \eqref{EFL:Follower_dual}, together with complementarity conditions and 
the requirement that  $\bdvapi\geq0$ to ensure that the obtained solution is feasible to \eqref{eq:EFL-SL-LagrDual}.

\section{RGUP in power distribution networks}

\subsection{Single-level reformulations}
\label{sec:RGU-Appendix_SingleLevel}
To cast this formulation in the format of Section~\ref{sec:problem_setup}, one would choose 
$\bvay:=\begin{pmatrix} \bvadem,& \bvasup\end{pmatrix}$, 
$\ubvay:=(\bm{\infty},\ubvasup)$, 
$\bvaw:= \bvaflo$, 
$\ubvaw:=\ubvaflo$, 
$\lhsy:=\begin{pmatrix} 
\idyoD{} & 0 \\ 
\idyiD{} & -\idyiS{} \\ 
0 & 0
\end{pmatrix}$, 
$\lhsw:=\begin{pmatrix} 
\idyof{} \\ 
\idyif{} \\ 
\reac 
\end{pmatrix}$, and 
$\brhsB:=\begin{pmatrix} {0} \\ {0} \\ {0} \end{pmatrix}$. 

Under Assumption~\ref{assump:Unique}, the following optimality conditions are necessary and sufficient for the problem faced by {the} ISO, where {
we use $\bdvath$ to represent dual variables on the capacity constraints, $\bdvamu$ to represent dual variables on non-negativity constraints, and where we specify the dual variables on equality constraints inside of square brackets:}   
\begin{subequations}
\begin{alignat}{5}    
\noalign{\textit{Primal Feasibility:}}         
         & \idyof{} \bvaflo + \idyoD{} \bvadem - \Diag(\samp) \bvaq = 0 \quad && [\bdvalao] \label{constr:RDG-MPCC-PFa}\\
         & \idyif{} \bvaflo + \idyiD{} \bvadem - \idyiS{} \bvasup = 0 \quad && [\bdvalai] \label{constr:RDG-MPCC-PFb}\\
         & \reac \bvaflo = 0 \quad && [\bdvaal] \label{constr:RDG-MPCC-PFc}\\
         & \bvasup \leq \ubvasup,\; \bvaflo \leq \ubvaflo \\
         & \bvaflo \geq 0,\; \bvadem \geq 0,\; \bvasup \geq 0 \quad  
     \label{constr:RDG-MPCC-PF} \\
\noalign{\textit{Dual Feasibility:}}     
     & \bdvamuflo\geq0,\;\; \bdvamudem\geq0,\;\; \bdvamusup\geq0,\;\; \bdvathflo\geq0,\;\; \bdvathsup \geq 0 \quad  \label{constr:RDG-MPCC-DF}\\
\noalign{\textit{Stationarity Conditions:}}     
         & \idyof{\ltr} \bdvalao + \idyif{\ltr} \bdvalai + \reac\tr\bdvaal + \bdvathflo - \bdvamuflo = 0 \quad  \label{constr:RDG-MPCC-Stnry1}\\
         & - \mybbeta(\bvadem) + \idyoD{\ltr} \bdvalao + \idyiD{\ltr} \bdvalai - \bdvamudem = 0 \quad  \label{constr:RDG-MPCC-Stnry2}\\
         & \mybgamma(\bvasup) - \idyiS{\ltr} \bdvalai + \bdvathsup - \bdvamusup = 0 \quad  \label{constr:RDG-MPCC-Stnry3}\\
\noalign{\textit{Complementarity Slackness:}}         
         & \bvaflo\tr \bdvamuflo  = 0,\;\; \bvadem\tr \bdvamudem= 0,\;\; \bvasup\tr \bdvamusup= 0 \quad  \\ 
         & (\ubvaflo-\bvaflo)\tr \bdvathflo  = 0,\;\; (\ubvasup-\bvasup)\tr \bdvathsup= 0. 
     \label{constr:RDG-EqbmPrice}  
\end{alignat}
\end{subequations}

Denote by $\Psi(\bvaq,\samp)=\left\{(\bvaflo,\bvadem,\bvasup,\bdvala,\bdvaal,\bdvamu,\bdvath): \eqref{constr:RDG-MPCC-PFa}-\eqref{constr:RDG-EqbmPrice}\right\}$. 
The single-level reformulation for optimally locating RGUs is a two-stage stochastic program maximizing the leader firm's expected profit given by:
\begin{subequations}\label{eq:RGU-SL-TwoStageSP}
\begin{align}
\max_{\substack{\bvaz,\bvaq}}\;\; & - \conewz\tr  \bvaz - \coq\tr \bvaq + \expec\Big[\max_{\substack{(\bvaflo,\bvadem,\bvasup,\bdvala,\bdvaal,\bdvamu,\bdvath)\\ \in\ \Psi(\bvaq,\samp)}} \bdvalaotr \Diag(\samp) \bvaq\Big]  \\
\st\;\; & (\bvaz, \bvaq) \in \feas_\textrm{RGUP}, \label{constr:RDG-MPCC-Leader3} \\
        & \bdvala \geq 0, \label{constr:RDG-MPCC-Leader2} 
\end{align}
\end{subequations}
where {the set $\feas_\textrm{RGUP}$ is specified in Section \ref{sec:RGU-Descrp}}, $\conewz_i$ is the fixed cost for installing a RGU at node $i\in\nodeso$ and 
$\coq_i$ is the cost per unit of RGU capacity installation at node $i\in\nodeso$. 

Assume next that we draw finite samples for the uncertainty $\{\samp^{\langle {k}\rangle}\}_{{k}=1}^{\nsample}$. 
Then using sample average approximation for the expectation, the single-level reformulation becomes
\begin{align}\label{eq:RGU-SL-MPCC}
    \begin{aligned}
    \max_{\substack{\bvaz,\bvaq,\bvaflo^{\langle {k}\rangle},\bvadem^{\langle {k}\rangle},\bvasup^{\langle {k}\rangle},\\ \bdvala^{\langle {k}\rangle},\bdvaal^{\langle {k}\rangle},\bdvamu^{\langle {k}\rangle},\bdvath^{\langle {k}\rangle}}}\;\; 
    &  \frac{1}{\nsample} \sum_{{k}=1}^{\nsample} {\bm{\lambda}_{0}^{\langle {k}\rangle}}\tr \Diag(\samp^{\langle {k}\rangle}) \bvaq  - \conewz\tr  \bvaz - \coq\tr \bvaq \\
    \st\;\; \qquad \quad &  \eqref{constr:RDG-MPCC-Leader3},\\
    & (\bvaflo^{\langle {k}\rangle},\bvadem^{\langle {k}\rangle},\bvasup^{\langle {k}\rangle},\bdvala^{\langle {k}\rangle},\bdvaal^{\langle {k}\rangle},\bdvamu^{\langle {k}\rangle},\bdvath^{\langle {k}\rangle}) \in \Psi(\bvaq,\samp^{\langle {k}\rangle}), \forall {k} \in [\nsample], \\
    & \bdvala^{\langle {k}\rangle}\geq0, \forall {k} \in [\nsample].
    \end{aligned}
\end{align}
Using Theorem~\ref{theorem:SL-Reform2} for each sample of uncertainty, the objective function in \eqref{eq:RGU-SL-MPCC} can be re-expressed to obtain
\begin{align}\label{eq:RGU-SL-LagrDual}
    \begin{aligned}
    \max_{\substack{\bvaz,\bvaq,\bvaflo^{\langle {k}\rangle},\\ \bvadem^{\langle {k}\rangle},\bvasup^{\langle {k}\rangle}, \bdvala^{\langle {k}\rangle}, \\ \bdvaal^{\langle {k}\rangle},\bdvamu^{\langle {k}\rangle},\bdvath^{\langle {k}\rangle}}}\; 
    &
    \begin{array}{l}
    \frac{1}{\nsample} \sum_{{k}=1}^{\nsample} \left(\left\langle \mybbeta(\bvadem^{\langle {k}\rangle}),\ \bvadem^{\langle {k}\rangle} \right\rangle - \left\langle \mybgamma(\bvasup^{\langle {k}\rangle}),\ \bvasup^{\langle {k}\rangle} \right\rangle -{\ubvaflotr \bdvathflo^{\langle {k}\rangle}} -{\ubvasuptr \bdvathsup^{\langle {k}\rangle}}\right)  \\
    \qquad - \conewz\tr  \bvaz - \coq\tr \bvaq 
    \end{array}
    \\
        \st\;\; \qquad &  \eqref{constr:RDG-MPCC-Leader3}, \\
        & (\bvaflo^{\langle {k}\rangle},\bvadem^{\langle {k}\rangle},\bvasup^{\langle {k}\rangle},\bdvala^{\langle {k}\rangle},\bdvaal^{\langle {k}\rangle},\bdvamu^{\langle {k}\rangle},\bdvath^{\langle {k}\rangle}) \in \Psi(\bvaq,\samp^{\langle {k}\rangle}), \forall {k} \in [\nsample]\\ & \bdvala^{\langle {k}\rangle}\geq0, \forall {k} \in [\nsample].
    \end{aligned}
\end{align}

\subsection{Rounding heuristic procedure}
\label{sec:RGU-Appendix_RH}

The Rounding Heuristic (\RH) procedure used here is essentially that given in {Algorithm~\ref{algo:roundingI} in Appendix~\ref{sec:EFL-Appendix_RH}} except that Step 3 is replaced with the following:
\begin{itemize}
\item STEP-3: Fix $(\widetilde \bvaz, \widetilde \bvaq)$ and solve the lower-level primal and dual problems for each uncertainty sample $\samp^{\langle {k}\rangle}$ to obtain feasible $({\mymark \bvaflo}^{\langle {k}\rangle}, {\mymark \bvadem}^{\langle {k}\rangle}, {\mymark \bvasup}^{\langle {k}\rangle})$ and  $(\markmyblambda^{\langle {k}\rangle}, \widetilde{\bm{\alpha}}^{\langle {k}\rangle}, \markbdvamuflo^{\langle {k}\rangle},\markbdvamudem^{\langle {k}\rangle}, \markbdvamusup^{\langle {k}\rangle}, \markbdvathflo^{\langle {k}\rangle}, \markbdvathsup^{\langle {k}\rangle})$. 
Due to the lower-level cost vector in our instances being affine, the primal and dual problems are convex quadratic programs \eqref{RGU:Follower_primal} and \eqref{RGU:Follower_dual}, which are described in Appendix~\ref{sec:RGU-Appendix_FollowerPrimalDual} and can be efficiently solved using GUROBI.
\end{itemize}
We remark that $2\nsample$ convex quadratic programs must be solved to find a feasible solution,  where $\nsample$ is the sample size of uncertainty.

\subsection{Lower-level primal and dual problems}
\label{sec:RGU-Appendix_FollowerPrimalDual}

Assume that the affine cost vector functions are defined as in Section~\ref{sec:RGU-InstGen}:
\begin{alignat*}{5}
    & [\mybbeta(\bvadem)]_{i} = -\mybeta{i}{1}\cdot \bvadem_i + \mybeta{i}{0}, &\quad& \forall i\in\nodesD \\
    & [\mybgamma(\bvasup)]_{j} = \mygamma{j}{1}\cdot \bvasup_j + \mygamma{j}{0}, &\quad& \forall j\in\nodesS
\end{alignat*}
where $\mybeta{i}{1}>0$ and $\mygamma{j}{1}>0$. 
Then, for each sample of uncertainty $\samp^{\langle {k}\rangle}$, the follower's primal problem is the convex quadratic program:
\begin{subequations}\label{RGU:Follower_primal}
    \begin{align}
    \min_{\bvaflo^{\langle {k}\rangle},\bvadem^{\langle {k}\rangle},\bvasup^{\langle {k}\rangle}}\;\; &  \sum_{i\in\nodesD} \left(\frac{1}{2}\mybeta{i}{1} (\bvadem_i^{\langle {k}\rangle})^2 - \mybeta{i}{0} \bvadem_i^{\langle {k}\rangle}\right) + \sum_{j\in\nodesS} \left(\frac{1}{2}\mygamma{j}{1} (\bvasup_j^{\langle {k}\rangle})^2 + \mygamma{j}{0} \bvasup_j^{\langle {k}\rangle}\right)  \\
    \st\;\;  
    & \idyof{} \bvaflo^{\langle {k}\rangle} + \idyoD{} \bvadem^{\langle {k}\rangle} - \Diag(\samp{^{\langle {k}\rangle}}) \bvaq = 0 \\
         & \idyif{} \bvaflo^{\langle {k}\rangle} + \idyiD{} \bvadem^{\langle {k}\rangle} - \idyiS{} \bvasup^{\langle {k}\rangle} = 0 \\
         & \reac \bvaflo^{\langle {k}\rangle} = 0 \\
         & 0 \leq \bvaflo^{\langle {k}\rangle} \leq \ubvaflo,\;\; \bvadem^{\langle {k}\rangle} \geq 0,\;\;  0 \leq \bvasup^{\langle {k}\rangle} \leq \ubvasup.
\end{align}
\end{subequations}
Its dual problem is a concave quadratic program:
\begin{subequations}\label{RGU:Follower_dual}
    \begin{align}
    \max_{\substack{\bdvala^{\langle {k}\rangle},\bdvaal^{\langle {k}\rangle},\bdvamuflo^{\langle {k}\rangle},\bdvamudem^{\langle {k}\rangle},\\ \bdvamusup^{\langle {k}\rangle},\bdvathflo^{\langle {k}\rangle},\bdvathsup^{\langle {k}\rangle}}}\;\; & 
    \begin{array}{l}
    -\frac{1}{2} \sum_{i\in\nodesD} \left({\bvaz_i^{\bvadem}}^{\langle {k}\rangle}\right)^2/\mybeta{i}{1} 
    -\frac{1}{2} \sum_{j\in\nodesS} \left({\bvaz_j^{\bvasup}}^{\langle {k}\rangle}\right)^2/\mygamma{j}{1}  \\
    \qquad - \ubvaflotr \bdvathflo^{\langle {k}\rangle} 
    - \ubvasuptr \bdvathsup^{\langle {k}\rangle} 
    - \bdvalaotr^{\langle {k}\rangle} \Diag(\samp{^{\langle {k}\rangle}})\bvaq
    \end{array}
    \\
    \st\;\; & \idyof{\ltr} \bdvalao^{\langle {k}\rangle} + \idyif{\ltr} \bdvalai^{\langle {k}\rangle} + \reac\tr\bdvaal^{\langle {k}\rangle} + \bdvathflo^{\langle {k}\rangle} - \bdvamuflo^{\langle {k}                   \rangle} = 0 \\
            & \bvazdem^{\langle {k}\rangle}=-\mybbeta^0+\idyoD{\ltr}  \bdvalao^{\langle {k}\rangle}+\idyiD{\ltr} \bdvalai^{\langle {k}\rangle}-\bdvamudem^{\langle {k}\rangle} \\
            & \bvazsup^{\langle {k}\rangle}=\mybgamma^0-\idyiS{\ltr} \bdvalai^{\langle {k}\rangle}+\bdvathsup^{\langle {k}\rangle}-\bdvamusup^{\langle {k}\rangle} \\
            & \bdvamuflo^{\langle {k}\rangle} \geq 0,\; \bdvamudem^{\langle {k}\rangle} \geq 0,\; \bdvamusup^{\langle {k}\rangle} \geq 0,\\
            & \bdvathflo^{\langle {k}\rangle} \geq 0,\; \bdvathsup^{\langle {k}\rangle} \geq 0.  \nonumber
\end{align}
\end{subequations}

In our computation, we solve a combined version of \eqref{RGU:Follower_primal} and \eqref{RGU:Follower_dual}, together with complementarity conditions and 
the requirement that  $\bdvala^{\langle {k}\rangle}\geq0$ to ensure that the obtained solution is feasible to \eqref{eq:RGU-SL-LagrDual}.

\end{appendices}

\end{document}